\numberwithin{equation}{section}
\newcommand{\N}{\mathbb{N}}
\newcommand{\R}{\mathbb{R}}
\newcommand{\probspace}{\mathscr{P}}
\newcommand{\contspace}{\mathscr{C}}
\newcommand{\ener}{E}
\newcommand{\mres}{\mathbin{\vrule height 1.6ex depth 0pt width
0.13ex\vrule height 0.13ex depth 0pt width 1.3ex}}
\newcommand{\eps}{\varepsilon}
\newcommand{\hdm}{{\mathscr H}}
\newcommand{\lbm}{{\mathscr L}}
\newcommand{\dd}{\mathop{}\mathopen{}\mathrm{d}}
\newcommand{\entropy}{\mathrm{Ent}}
\newcommand{\Opt}{\mathsf{Opt}}
\newcommand{\mbf}[1]{\bm{#1}}
\newcommand{\OT}{\mathrm{OT}}
\newcommand{\spt}[1]{S_{#1}}
\newcommand{\loc}{\mathrm{loc}}
\DeclarePairedDelimiter\abs{\lvert}{\rvert}
\DeclarePairedDelimiter\norm{\lVert}{\rVert}
\DeclareMathOperator{\diam}{diam}
\DeclareMathOperator{\tr}{Tr}
\DeclareMathOperator{\dist}{dist}
\DeclareMathOperator{\id}{Id}
\DeclareMathOperator*{\argmin}{arg\,min}
\newcommand{\xto}[2][]{\xrightarrow[#1]{#2}}
\newcommand{\weakto}{\rightharpoonup}
\declaretheorem[name=Theorem,within=section]{theorem}
\declaretheorem[name=Lem\-ma,numberlike=theorem]{lemma}
\declaretheorem[name=Proposition,numberlike=theorem]{proposition}
\declaretheorem[name=Corollary,numberlike=theorem]{corollary}
\declaretheorem[name=Definition,numberlike=theorem,style=definition]{definition}
\declaretheorem[name=Remark,numberlike=theorem,style=remark]{remark}
\declaretheorem[name=Example,numberlike=theorem,style=remark]{example}
\begin{document}
\title{Convergence rate of general entropic optimal transport costs}
\author{Guillaume Carlier
	\thanks{CEREMADE, Université Paris-Dauphine, Université PSL, CNRS, Mokaplan, Inria Paris, 75016 Paris, France.
		email: carlier@ceremade.dauphine.fr}
	\and Paul Pegon
	\thanks{CEREMADE, Université Paris-Dauphine, Université PSL, CNRS, Mokaplan, Inria Paris, 75016 Paris, France.
		email: pegon@ceremade.dauphine.fr}
	\and Luca Tamanini
	\thanks{Department of Decision Sciences and BIDSA, Bocconi University, Via Roberto Sarfatti 25, 20100 Milano MI, Italy.
		email: luca.tamanini@unibocconi.it}
}

\date{}
\maketitle

\begin{abstract}
We investigate the convergence rate of the optimal entropic cost $v_\varepsilon$ to the optimal transport cost as the noise parameter $\varepsilon \downarrow 0$. We show that for a large class of cost functions $c$ on $\mathbb{R}^d\times \mathbb{R}^d$ (for which optimal plans are not necessarily unique or induced by a transport map) and compactly supported and $L^{\infty}$ marginals, one has $v_\varepsilon-v_0= \frac{d}{2} \varepsilon \log(1/\varepsilon)+ O(\varepsilon)$. Upper bounds are obtained by a block approximation strategy and an integral variant of Alexandrov's theorem. Under an infinitesimal twist condition on $c$, i.e.\ invertibility of $\nabla_{xy}^2 c(x,y)$, we get the lower bound by establishing a quadratic detachment of the duality gap in $d$ dimensions thanks to Minty's trick.



\end{abstract}

\vskip\baselineskip\noindent
\textit{Keywords.} optimal transport, entropic regularization, Schr\"odinger problem, convex analysis, entropy dimension.\\
\textit{2020 Mathematics Subject Classification.}  Primary: 49Q22 ; Secondary: 49N15, 94A17, 49K40.


\tableofcontents


\section*{Notations}
\begin{longtabu}  {X[2,c,m]  X[10,l,p]}
$x,\mbf{x}$ & generic points of $\R^d$ and of $\R^d\times \R^d$ respectively;\\
$\abs{\cdot}$ & Euclidean norm on $\R^d$;\\
$\norm{\cdot}$ & norm on $\R^d \times \R^d$ defined by $\norm{\mbf x} = \max \{\abs{x},\abs{y}\}$ if $\mbf x = (x,y)$;\\
$B_r(x),B_r(\mbf x)$ & open ball of radius $r$ centered at $x\in\R^d$ or $\mbf x\in (\R^d)^2$ for the above norms;\\
$\contspace^{0,1}(X)$ & space of real-valued Lipschitz functions on $X$ which is a subset of $\R^d$ or $(\R^d)^2$;\\
$[f]_{\contspace^{0,1}(X)}$ & Lipschitz constant of $f : X \to \R$ where $X$ is a subset of $\R^d$ or $(\R^d)^2$ for the above norms;\\
$\contspace_\loc^{1,1}(\Omega)$ & space of differentiable real-valued functions on $\Omega$, an open subset of $\R^d$ or $(\R^d)^2$, with locally Lipschitz gradient;\\
$\probspace(\R^d)$ & space of probability measures on $\R^d$;\\
$\spt{\mu}$ & support of the measure $\mu$;\\
$M_d(\R)$ & space of real matrices of size $d \times d$, endowed with the Frobenius norm induced by the scalar product $A \cdot B \coloneqq\tr(A^T B)$, for $A,B\in M_d(\R)$;\\
$S_d(\R)$ & subspace of real symmetric matrices of size $d \times d$.
\end{longtabu}

\section{Introduction}

We consider two probability measures $\mu^\pm$ compactly supported in $X^\pm \subseteq \R^d$ and a cost function $c : X^-\times X^+ \to \R$. The \emph{Entropic Optimal Transport problem} (also called \emph{entropy-regularized optimal transport problem}, EOT for short) reads as:
\begin{equation}\label{eot-intro}
v_\eps \coloneqq \inf\Bigg\{\int_{X^-\times X^+} c(x,y)\,\dd\gamma(x,y) + \eps\entropy(\gamma\,|\,\mu^- \otimes \mu^+)\Bigg\}
\end{equation} 
where the infimum is taken among all \emph{couplings} $\gamma$ between $\mu^-$ and $\mu^+$, i.e. probability measures having $\mu^-$ and $\mu^+$ as marginals. The classical optimal transport (OT) problem corresponds to $\eps = 0$. The penalization term $\entropy$ is the Boltzmann-Shannon relative entropy (also called Kullback-Leibler divergence) and $\eps>0$ can be interpreted as a temperature parameter. Heuristically, this consists in moving $\mu^-$ onto $\mu^+$ in the cheapest and (at the same time) most ``diffuse'' way, since the (unique) minimizer $\gamma_\eps$ of \labelcref{eot-intro} is forced to be absolutely continuous with respect to $\mu^- \otimes \mu^+$ because of the entropy term and so its mass has to be ``spread out'', in contrast with solutions to the unperturbed transport problem.


In the last decade, this class of problems has witnessed a rapidly increasing interest and is now an extremely active research topic, because it has found numerous applications and proved to be an efficient way to approximate OT problems, especially from a computational viewpoint. Indeed, when it comes to solving EOT by alternating Kullback-Leibler projections on the two marginal constraints, by the algebraic properties of the entropy such iterative projections correspond to the celebrated Sinkhorn's algorithm \cite{sinkhornRelationshipArbitraryPositive1964}, applied in this framework in the pioneering works \cite{cuturiSinkhornDistancesLightspeed2013, benamouIterativeBregmanProjections2015}. The simplicity and the good convergence guarantees (see \cite{franklinScalingMultidimensionalMatrices1989, marinoOptimalTransportApproach2020}) of this method compared to the algorithms used for the Monge-Kantorovich problem, then determined the success of EOT for applications in machine learning, statistics, image processing, language processing and other areas (see the monograph \cite{peyreComputationalOptimalTransport2019} and references therein).

\bigskip

As it appears clearly from \labelcref{eot-intro}, EOT is a perturbed transport problem and, as such, it is natural to investigate its behaviour as the parameter $\eps$ vanishes. In this direction, several aspects deserve to be studied, such as the convergence of optimal values, potentials and optimal plans, possibly with quantitative rates. Leveraging on a large deviations interpretation and on the notion of $(c,\eps)$-cyclical monotonicity, the two last questions have been addressed very recently in \cite{berntonEntropicOptimalTransport2022, nutzEntropicOptimalTransport2021}.

As concerns the convergence of the optimal values (also called \emph{entropic costs}) denoted by $v_\eps$, let us mention earlier contributions. In the pioneering works \cite{mikamiMongeProblemQuadratic2004, mikamiOptimalTransportationProblem2008, leonardSchrodingerProblemMonge2012} (which tackled the question from the Schr\"odinger problem's viewpoint) and \cite{carlierConvergenceEntropicSchemes2017}, the $\Gamma$-convergence of the EOT problem towards the unregularized OT problem was proved in the quadratic case, i.e.\ for $c(x,y) = |x-y|^2$. As a consequence, the optimal value of the quadratic EOT problem converges to the optimal value of the quadratic OT problem, namely the squared Wasserstein distance.

Since then, this convergence result was generalized in two directions: at the level of the accuracy and at the level of the cost function. As for the former, in \cite{adamsLargeDeviationsPrincipleWasserstein2011, erbarLargeDeviationsWasserstein2015, palDifferenceEntropicCost2019} the first-order asymptotic expansion for the cost is established, both in a pointwise and $\Gamma$-convergence sense. A further improvement has been obtained independently in \cite{confortiFormulaTimeDerivative2021, chizatFasterWassersteinDistance2020}, where also the second-order term in the expansion is determined (under a regularity assumption on the Wasserstein geodesic connecting the two marginals). A second-order expansion in the same spirit has been obtained for semi-discrete OT in \cite{altschulerAsymptoticsSemidiscreteEntropic2022}. As regards the second direction (more general costs), the $\Gamma$-convergence result actually holds true for a very large class of continuous cost functions, but the first-order expansion is much more difficult to extend. In \cite{palDifferenceEntropicCost2019}, this is achieved for costs satisfying strong regularity assumptions (roughly speaking, a uniform ellipticity condition on the Hessian of the Kantorovich potential of the associated OT problem).

\medskip

The aim of this paper is to continue in the second direction, namely to weaken even further the assumptions on the cost function $c$. First of all, it is folklore that $v_\eps$ is a $\contspace^\infty$ function of $\eps > 0$ (and even analytic as we shall see in \Cref{thm:analytic}). However, the differentiability of $v_\eps$ at $\eps = 0^+$ is equivalent to the existence of a finite entropy solution for $v_0$, which is generally false, apart from the discrete case. Instead, one expects $v_\eps-v_0$ to be of the order of $\eps \log(1/\eps)$. We shall prove that it is indeed the case and that the remainder is $O(\eps)$ under quite general assumptions, one of which, inspired by \cite{maRegularityPotentialFunctions2005, mccannRectifiabilityOptimalTransportation2012},  is the \emph{infinitesimal twist condition} which requires the cross-derivative $\nabla^2_{xy}c(x,y)$ to be invertible for every $(x,y)$. It is worth stressing that for costs satisfying this condition, the solutions to the associated OT problem need not be concentrated on a graph and may even fail to be unique, see \cite[Example 3.1]{mccannRectifiabilityOptimalTransportation2012}. Our main findings may be summarized as:

\begin{theorem}\label{thm:main}
Let $\Omega^\pm \subseteq \R^d$ be open convex sets and $\mu^\pm$ be absolutely continuous probability measures compactly supported on $\Omega^\pm$ and with $L^\infty$ densities. Assume that the cost $c$ is $\contspace^2$ on $\Omega^- \times \Omega^+$ and infinitesimally twisted. Then
\[
v_\eps = v_0 + \frac{d}{2}\eps \log(1/\eps)+ O(\eps),
\]
$v_\eps$ being the value of \labelcref{eot-intro}.
\end{theorem}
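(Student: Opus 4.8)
Since $\entropy(\gamma\,|\,\mu^-\otimes\mu^+)\ge 0$ for every coupling, one always has $v_\eps\ge v_0$, so it suffices to prove matching upper and lower bounds for $v_\eps-v_0$, which I would obtain by quite different arguments. Write $K\coloneqq\supp\mu^-\times\supp\mu^+$, a compact subset of $\Omega^-\times\Omega^+$; fix an optimal plan $\gamma_0$ for $v_0$ together with a pair $(\varphi_0,\psi_0)$ of $c$-conjugate Kantorovich potentials (so $\varphi_0(x)=\inf_y\{c(x,y)-\psi_0(y)\}$ and symmetrically), and let $\gamma_\eps$ be the unique optimal plan for $v_\eps$. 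As $c$ is $\contspace^2$ near $K$, both $\varphi_0$ and $\psi_0$ are semiconcave, hence twice differentiable $\lbm^d$-a.e.\ (Alexandrov); this will be used, in an integrated form, on both sides.

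\emph{Upper bound (block approximation).} For small $\delta>0$, partition bounded cubes $K^\pm\supseteq\supp\mu^\pm$ into subcubes $\{Q_i\}$, $\{R_j\}$ of side $\delta$ and form the coupling
\[
\gamma^\delta\coloneqq\sum_{i,j}\frac{\gamma_0(Q_i\times R_j)}{\mu^-(Q_i)\,\mu^+(R_j)}\,(\mu^-\mres Q_i)\otimes(\mu^+\mres R_j),
\]
which again has marginals $\mu^\pm$. A direct computation, using $\gamma_0(Q_i\times R_j)\le\min\{\mu^-(Q_i),\mu^+(R_j)\}$, the $L^\infty$ bounds on the densities and the concavity of $t\mapsto t\log(1/t)$, gives $\entropy(\gamma^\delta\,|\,\mu^-\otimes\mu^+)=\sum_{i,j}\gamma_0(Q_i\times R_j)\log\tfrac{\gamma_0(Q_i\times R_j)}{\mu^-(Q_i)\mu^+(R_j)}\le d\log(1/\delta)+C$. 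The key — and delicate — point is that the cost is perturbed by only $O(\delta^2)$, i.e.\ $\int c\,d\gamma^\delta\le v_0+C\delta^2$, improving on the naive Lipschitz bound $O(\delta)$: one Taylor-expands $c$ to second order on each block $Q_i\times R_j$, observes that the constant terms cancel (the masses of $\gamma^\delta$ and $\gamma_0$ agree there), and that the first-order terms, after replacing $\nabla_x c$ and $\nabla_y c$ by $\nabla\varphi_0$ and $\nabla\psi_0$ on $\supp\gamma_0$ (valid up to $O(\delta^2)$), reorganise into sums of second differences of $\nabla\varphi_0,\nabla\psi_0$ over $\delta$-cubes; a summation-by-parts cancellation, the one-sided quadratic bound from semiconcavity, and an integral form of Alexandrov's theorem (bounding these averaged second differences by the absolutely continuous Hessians, which lie in $L^1(\mu^\pm)$) keep the total $O(\delta^2)$. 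Then $v_\eps\le\int c\,d\gamma^\delta+\eps\,\entropy(\gamma^\delta\,|\,\mu^-\otimes\mu^+)\le v_0+C\delta^2+\eps\,d\log(1/\delta)+C\eps$, and $\delta=\sqrt\eps$ gives $v_\eps\le v_0+\tfrac d2\,\eps\log(1/\eps)+O(\eps)$.

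\emph{Lower bound (duality, twist, Minty).} From $v_\eps=\int c\,d\gamma_\eps+\eps\,\entropy(\gamma_\eps\,|\,\mu^-\otimes\mu^+)$, dual feasibility of $(\varphi_0,\psi_0)$ gives $\int c\,d\gamma_\eps\ge\int(\varphi_0\oplus\psi_0)\,d\gamma_\eps=v_0$, hence $\int c\,d\gamma_\eps=v_0+\int g\,d\gamma_\eps$ with $g\coloneqq c-\varphi_0\oplus\psi_0\ge0$ vanishing on $\supp\gamma_0$; and $L^\infty$ densities give $\entropy(\gamma_\eps\,|\,\mu^-\otimes\mu^+)\ge\entropy(\gamma_\eps\,|\,\lbm^{2d})-C$. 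Hence, by the Gibbs variational principle,
\[
v_\eps-v_0\ \ge\ \inf_{\gamma\in\probspace(K)}\Big\{\int g\,d\gamma+\eps\,\entropy(\gamma\,|\,\lbm^{2d})\Big\}-C\eps\ =\ -\eps\log\int_K e^{-g/\eps}\,d\lbm^{2d}-C\eps.
\]
It remains to prove a \emph{quadratic detachment of the duality gap}, namely $\lbm^{2d}(\{g<t\})\le C\,t^{d/2}$ for small $t$ (ideally $g(x,y)\gtrsim\dist((x,y),\supp\gamma_0)^2$): granting it, since under the twist condition $\supp\gamma_0$ is compact, $d$-rectifiable and of finite $\hdm^d$-measure (rectifiability of optimal plans, \cite{mccannRectifiabilityOptimalTransportation2012}), a tube/coarea estimate gives $\int_K e^{-g/\eps}\,d\lbm^{2d}\le C\eps^{d/2}$, whence $v_\eps-v_0\ge-\eps\log(C\eps^{d/2})-C\eps=\tfrac d2\,\eps\log(1/\eps)-O(\eps)$. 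I expect this detachment to be the main obstacle, and it is where the infinitesimal twist is essential: for fixed $x_0$ one has $g(x_0,\cdot)=k_{x_0}-\min k_{x_0}$ with $k_{x_0}(y)=c(x_0,y)-\psi_0(y)=\sup_{x'}\{c(x_0,y)-c(x',y)+\varphi_0(x')\}$; the invertibility of $\nabla^2_{xy}c$ makes $y\mapsto\nabla_x c(x_0,y)$ a local diffeomorphism, in which coordinates Minty's trick turns the contact set into the graph of a (cost-)monotone map and forces the resulting Fenchel–Young-type gap to separate at least quadratically from its zero set, with a constant made uniform via compactness of $\supp\gamma_0$ and continuity of $\nabla^2_{xy}c$. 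Upgrading this local, coordinate-dependent picture to a uniform quadratic lower bound for the merely Lipschitz $g$, simultaneously in all $d$ transverse directions and using only the twist (so that optimal plans may be neither unique nor graphs) — the $L^\infty$ marginal hypothesis being what prevents the potentials from degenerating enough to spoil it — is the heart of the matter.
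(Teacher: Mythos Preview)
Your architecture matches the paper's: block approximation with an $O(\delta^2)$ cost perturbation and $\delta=\sqrt\eps$ for the upper bound; reduction to $\int e^{-E/\eps}\le C\eps^{d/2}$ via duality, then a Minty-type quadratic detachment for the lower bound. Two points of comparison are worth making.

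\textbf{Upper bound.} The paper does not do the ``summation-by-parts'' manoeuvre you describe. It works directly with the duality gap $E=c-\varphi_0\oplus\psi_0$, uses that $E(\mbf{x_0})=0$ and $\nabla E(\mbf{x_0})=0$ for $\gamma_0$-a.e.\ $\mbf{x_0}$, and writes $\int E\,d\gamma^\delta=\int(E(\mbf x)-T_{\mbf{x_0}}E(\mbf x))\,d\kappa(\mbf{x_0},\mbf x)$ for a $W_\infty$-optimal coupling $\kappa\in\Pi(\gamma_0,\gamma^\delta)$. Then $E-T_{\mbf{x_0}}E$ splits into the $c$-part (controlled pointwise by $\lambda$-concavity) and the $-\varphi_0$, $-\psi_0$ parts, handled by the integral Alexandrov lemma you correctly anticipate. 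No block-by-block cancellation of first-order terms is needed.

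\textbf{Lower bound.} Your Gibbs-variational detour is correct but unnecessary: plugging $(\varphi_0,\psi_0)$ into the dual gives $v_\eps\ge v_0-\eps\log\int e^{-E/\eps}\,d\mu^-\otimes\mu^+$ in one line, and $L^\infty$ densities then let one pass to Lebesgue measure. The substantive difference is in the Minty implementation. Your plan---fix $x_0$, change variables by $y\mapsto\nabla_x c(x_0,y)$, and study $g(x_0,\cdot)$---faces the awkwardness of making the detachment uniform in $x_0$ when the zero set of $g(x_0,\cdot)$ need not be a singleton. The paper sidesteps this with a \emph{joint} linear change of variables near each $\mbf{\bar x}\in\Sigma$: with $u=\tfrac12(x+\nabla^2_{xy}c(\mbf{\bar x})\,y)$ and $v=\tfrac12(x-\nabla^2_{xy}c(\mbf{\bar x})\,y)$, $c$-conjugacy plus a second-order Taylor of $c$ give the two-point inequality
\[
E(\mbf x)+E(\mbf{x'})\ \ge\ (1-\kappa(r))\,|\Delta u|^2-(1+\kappa(r))\,|\Delta v|^2 .
\]
Taking $\mbf x,\mbf{x'}$ on the same $v$-slice with $\mbf x$ a minimiser of $E$ there yields $E(\mbf{x'})\ge\tfrac14|u'-u_v|^2$, and integrating out $u$ gives the Gaussian $\eps^{d/2}$ bound on each patch; a finite cover of the compact set $\Sigma$ finishes. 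Note in particular that the paper neither proves nor needs your hoped-for global estimate $g(\mbf x)\gtrsim\dist(\mbf x,\Sigma)^2$, nor does it invoke the rectifiability of $\Sigma$ or any tube/coarea argument: the $(u,v)$ two-point inequality makes the whole ``heart of the matter'' a two-line computation.
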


Hence $v_\eps$ approximates the transport cost $v_0$ with accuracy $\frac{d}{2}\eps\log(1/\eps)$ and the next term in the expansion is at most of order $\eps$ ; it cannot be better in general, as shown in the cases handled in \cite{palDifferenceEntropicCost2019}. As an application, if we debias the problem, namely if we consider the Sinkhorn divergence (see \cite{ramdasWassersteinTwoSampleTesting2017})
\[
\OT_\eps(\mu^-,\mu^+) \coloneqq v_\eps(\mu^-,\mu^+) - \frac{1}{2}\big(v_\eps(\mu^-,\mu^-) + v_\eps(\mu^+,\mu^+)\big)
\]
instead of the entropic cost $v_\eps$, it immediately follows from our main theorem that the term in $\eps\log(1/\eps)$ disappears and therefore
\[
\OT_\eps(\mu^-,\mu^+) = v_0(\mu^-,\mu^+) + O(\eps).
\]
For the proof of Theorem \ref{thm:main} we will show separately that for $\eps \ll 1$,
\begin{equation}\label{eq:aim}
v_\eps \leq v_0 + \frac{d}{2}\eps\log(1/\eps) + M\eps \qquad \textrm{and} \qquad v_\eps \geq v_0 + \frac{d}{2}\eps\log(1/\eps) - m\eps,
\end{equation}
for some constants $m,M$. 

For the \textbf{upper bound}, the proof relies on the block approximation introduced in \cite{carlierConvergenceEntropicSchemes2017} and on an integral variant of Alexandrov's theorem tailored to our purpose (see \Cref{lem:alexandrov-modified}). Let us emphasize that this upper bound only requires $\contspace^{1,1}$ regularity and no twist condition. We also treat more general situations where the cost function is only Lipschitz, in which case the constant $d/2$ has to be replaced by the \emph{upper entropy dimension} (without the $1/2$ factor) of the marginals (see \Cref{value_general_upper_bound}), and this is shown to be sharp in  \Cref{ex:sharpness}.

For the \textbf{lower bound}, from the dual formulation of \labelcref{eot-intro} it follows:
\[
v_\eps \geq v_0 - \eps\log\int_{\R^d \times \R^d}e^{-\frac{E(x,y)}{\eps}}\,\dd\mu^-(x)\dd\mu^+(y),
\]
where $E(x,y) \coloneqq c(x,y) - \phi(x) - \psi(y)$ is the duality gap and $(\phi,\psi)$ are Kantorovich potentials for the OT problem. By using the so-called Minty's trick as presented in \cite{mccannRectifiabilityOptimalTransportation2012} (here the infinitesimal twist condition on $c$ is required), we are able to prove that $E$ detaches quadratically from the set $\{E=0\}$ and this allows us to estimate the previous integral in the desired way. We also explain in the quadratic case how to derive a quantitative stability result for optimal plans from the same trick.

\subsection*{Related literature}

The discrete case with finitely supported marginals falls within the class of finite-dimensional linear programming problems. In this framework, the choice of the entropy as regularizing function has a long tradition and a detailed investigation of convergence rates for regularized primal/dual formulations and expansion of the cost was already addressed in the early work \cite{cominettiAsymptoticAnalysisExponential1994}. 

In the continuous setting, entropic regularization is a more recent technique \cite{carlierConvergenceEntropicSchemes2017} and has been initially employed in the quadratic case $c(x,y) = |x-y|^2$, where EOT appears as a perturbation of the (squared) Wasserstein distance. For this choice of cost function, EOT is intimately linked (and actually equivalent in the Euclidean setting) to a much older problem, known as Schr\"odinger problem (SP for short, see the survey \cite{leonardSurveySchrodingerProblem2014}). The latter can be better described as a control problem for Brownian particles, hence a dynamical problem, unlike EOT which is static in nature. This dynamical aspect turns out to be very useful to overcome regularity problems affecting Wasserstein geodesics, as already shown in \cite{gigliSecondOrderDifferentiation2021, gentilEntropicInterpolationProof2020}. The fact that, for $c(x,y) = |x-y|^2$, EOT and SP coincide (see \cite{gigliBenamouBrenierDuality2020}) explains why the quadratic EOT problem has attracted a lot of attention coming from different research areas: not only the ones mentioned before, where EOT has proved to be a valuable tool, but also (stochastic) control, statistical mechanics and many others.

In the very last years, several generalizations of both EOT and SP have appeared. As concerns the latter, extensions to mean-field and interacting particle systems \cite{backhoffMeanFieldSchrodinger2020, chiariniSchrodingerProblemLattice2021}, and to an abstract framework \cite{monsaingeonDynamicalSchrodingerProblem2020} are worth mentioning. As regards EOT, regularizing functions other than the entropy (and their impact on algorithms) have been considered for instance in \cite{dimarinoOptimalTransportLosses2020, lorenzOrliczSpaceRegularization2022}. Moreover, adding entropy penalizations in the spirit of EOT has also been used in the study of multi-marginal OT \cite{carlierLinearConvergenceMultimarginal2022, benamouGeneralizedIncompressibleFlows2019, marinoOptimalTransportApproach2020}, incompressible fluids \cite{arnaudonEntropicInterpolationProblem2020, baradatSmallNoiseLimit2020} and unbalanced OT \cite{baradatRegularizedUnbalancedOptimal2021}, where the entropy term gives rise to a branching unbalanced OT problem.

For an account of latest developments let us finally mention a series of recent works \cite{ghosalStabilityEntropicOptimal2021, ecksteinQuantitativeStabilityRegularized2021, nutzStabilitySchrodingerPotentials2022}, where stability with respect to marginals of plans and potentials for EOT is addressed.

\subsection*{Structure of the paper}

In \Cref{sec:reg}, we recall some main features of the OT and EOT problems and prove the analyticity of $v_\eps$ (\Cref{thm:analytic}). The upper and lower bounds on $v_\eps$ are established respectively in \Cref{sec:upper} (\Cref{value_general_upper_bound} and \Cref{value_general_upper_bound2}) and \Cref{sec:lower} (\Cref{prop:lower-bound}). 

\section{Regularity of the EOT cost}\label{sec:reg}

Let us first collect all notations, definitions and relevant results concerning the optimal transport problem and its entropy-regularized counterpart. As already anticipated in the introduction, given a parameter $\eps>0$, two measures $\mu^\pm \in \probspace(\R^d)$ with compact support $X^\pm \coloneqq \spt{\mu^\pm}$ and a continuous cost function $c \in \contspace(X^-\times X^+)$, the EOT problem is given by
\begin{equation}\tag{EOT$_\eps$}\label{eot_pb}
v_\eps \coloneqq \inf_{\gamma \in \Pi(\mu^-,\;\mu^+)}\Bigg\{\int_{X^-\times X^+} c(x,y)\,\dd\gamma(x,y) + \eps\entropy(\gamma\,|\,\mu^- \otimes \mu^+)\Bigg\},
\end{equation}
where $\Pi(\mu^-,\;\mu^+)$ denotes the set of couplings between $\mu^-$ and $\mu^+$ and $\entropy(\cdot\,|\,\mu^- \otimes \mu^+)$ the Boltzmann-Shannon relative entropy (or Kullback-Leibler divergence) w.r.t.\ the product measure $\mu^- \otimes \mu^+$, defined for general probability measures $p,q$ as
\[
\entropy(p \,|\, q) = 
\begin{dcases*}
\displaystyle{\int_{\R^d} \rho \log(\rho)\, \dd q} & if $p = \rho q$,\\
+\infty & otherwise.
\end{dcases*}
\]
The fact that $q$ is a probability measure ensures that $\entropy(p \,|\, q) \geq 0$. The value of \labelcref{eot_pb}, denoted by $v_\eps$, is called \emph{entropic cost}. 

The dual problem of \labelcref{eot_pb} in the sense of convex analysis reads as
\[
\sup_{\substack{\phi \in \contspace(X^-)\\\psi \in \contspace(X^+)}}\Bigg\{\int_{X^-}\phi\,\dd\mu^- + \int_{X^+}\psi\,\dd\mu^+ - \eps\int_{X^- \times X^+}e^{\frac{\phi(x)+\psi(y)-c(x,y)}{\eps}}\,\dd\mu^-(x)\dd\mu^+(y) + \eps \Bigg\},
\]
which is invariant by $(\phi,\psi) \mapsto (\phi+\lambda,\psi-\lambda)$ where $\lambda \in\R$. Another way to write the dual problem, which is also invariant by $\phi \mapsto \phi + \lambda$ and $\psi \mapsto \psi + \lambda$, reads as
\begin{equation}\tag{D$_\eps$}\label{dual_eot_pb}
\sup_{\substack{\phi \in \contspace(X^-)\\\psi \in \contspace(X^+)}}\Bigg\{\int_{X^-}\phi\,\dd\mu^- + \int_{X^+}\psi\,\dd\mu^+ - \eps\log\int_{X^- \times X^+}e^{\frac{\phi(x)+\psi(y)-c(x,y)}{\eps}}\,\dd\mu^-(x)\dd\mu^+(y)\Bigg\},
\end{equation}
see \cite{leonardMinimizationEnergyFunctionals2001} or \cite{nutzIntroductionEntropicOptimal2022} for a more recent presentation. From \labelcref{eot_pb} and \labelcref{dual_eot_pb} we recover, as $\eps \to 0$, the (unregularized) optimal transport problem associated with $c$ and its dual, that we recall for the reader's sake:
\begin{equation}\tag{OT}\label{ot_pb}
v_0 \coloneqq \inf_{\gamma \in \Pi(\mu^-,\;\mu^+)}\int_{X^- \times X^+} c(x,y)\,\dd\gamma(x,y)
\end{equation}
and
\begin{equation}\tag{D}\label{dual_ot_pb}
\sup_{\substack{\phi \in  \contspace(X^-)\\\psi \in  \contspace(X^+)}}\Bigg\{\int_{X^-}\phi\,\dd\mu^- + \int_{X^+}\psi\,\dd\mu^+ : \phi \oplus \psi \leq c\Bigg\},
\end{equation}
respectively, where $\phi \oplus \psi(x,y) \coloneqq \phi(x)+\psi(y)$. The set of optimal couplings between $\mu^-$ and $\mu^+$ will be denoted by $\Opt(\mu^-,\mu^+)$. The value of \labelcref{ot_pb}, denoted by $v_0$, is called \emph{transport cost}. As for \labelcref{dual_ot_pb}, it is well known that optimizers, called \emph{Kantorovich potentials}, exist whenever $X^\pm$ are compact and they can be chosen $c$-conjugate (see \cite[Chapter~1]{santambrogioOptimalTransportApplied2015}):
\[\phi = \inf_{y\in X^+} c(\cdot,y)-\psi(y),\quad\psi = \inf_{x\in X^-} c(x,\cdot)-\phi(x).\]

The link between EOT and OT is very strong, as already discussed in the introduction. For later use, recall that a consequence of the $\Gamma$-convergence of \labelcref{eot_pb} towards \labelcref{ot_pb} is that
\begin{equation}\label{eq:small-time}
\lim_{\eps \downarrow 0}v_\eps = v_0.
\end{equation}
A proof tailored to our setting can be found in \cite[Section 2.3]{carlierConvergenceEntropicSchemes2017} (although formulated for the quadratic cost, it actually works for any continuous cost as long as $\mu^\pm$ have compact supports).

\medskip

The fact that \labelcref{eot_pb} admits a unique solution $\gamma_\eps$, called \emph{optimal entropic plan}, is a consequence of the direct method in the calculus of variations and strict convexity of the entropy. The structure of $\gamma_\eps$ is then very rigid: indeed, there exist two real-valued Borel functions $\phi_\eps,\psi_\eps$ such that
\begin{equation}\label{eq:structure}
\gamma_\eps = \exp\Bigg(\frac{\phi_\eps \oplus \psi_\eps - c}{\eps}\Bigg) \mu^- \otimes \mu^+,
\end{equation}
which implies in particular that
\begin{equation}\label{eq:veps-rewritten}
v_\eps = \int_{X^-}\phi_\eps\,\dd\mu^- + \int_{X^+}\psi_\eps\,\dd\mu^+
\end{equation}
and these functions, which have continuous representatives, are a.s.\ uniquely determined up to additive constants, in the sense that if \labelcref{eq:structure} holds for $(\phi,\psi)$ and $(\phi',\psi')$, then $\phi=\phi'+\lambda$ $\mu^-$-a.e.\ and $\psi=\psi'-\lambda$ $\mu^+$-a.e.\ for some $\lambda \in \R$. Moreover, $\gamma_\eps$ is the unique coupling $\gamma \in \Pi(\mu^-,\;\mu^+)$ whose density w.r.t.\ $\mu^- \otimes \mu^+$ can be written as in \labelcref{eq:structure}. A self-contained proof of all these facts in the case of compactly supported measures $\mu^\pm$ can be found in \cite[Proposition 2.1]{gigliSecondOrderDifferentiation2021}. The reader is refered to the analysis of \cite{marinoOptimalTransportApproach2020}, to the notes of Nutz \cite{nutzIntroductionEntropicOptimal2022} for a more general framework, and to \cite{borweinDecompositionMultivariateFunctions1992, borweinEntropyMinimizationDAD1994, csiszarDivergenceGeometryProbability1975, follmerEntropyMinimizationSchrodinger1997, ruschendorfClosednessSumSpaces1998} for earlier references.

The functions $\phi_\eps,\psi_\eps$ in \labelcref{eq:structure} are called \emph{Schr\"odinger potentials}, the terminology being motivated by the fact that they solve the dual problem \labelcref{dual_eot_pb}. Furthermore, $\phi_\eps,\psi_\eps$ are the (unique) solutions to the so-called \emph{Schr\"odinger system}
\begin{equation}\label{eq:Ssystem}
\left\{\begin{array}{ll}
\displaystyle{\phi(x) = -\eps\log\int_{X^+}e^{\frac{\psi(y)-c(x,y)}{\eps}}\,\dd\mu^+(y)} & \qquad \textrm{for }\mu^-\textrm{-a.e. }x,\\
\displaystyle{\psi(y) = -\eps\log\int_{X^-}e^{\frac{\phi(x)-c(x,y)}{\eps}}\,\dd\mu^-(x)} & \qquad \textrm{for }\mu^+\textrm{-a.e. }y,
\end{array}\right.
\end{equation}
named after E.\ Schr\"odinger who introduced it in the seminal papers \cite{schrodingererwinUberUmkehrungNaturgesetze1931, schrodingerTheorieRelativisteElectron1932} (see also \cite{chetriteSchrodinger1931Paper2021} for a recent English translation of the former). Note that \labelcref{eq:Ssystem} is a softmin version of the classical $c$-conjugacy relation for Kantorovich potentials. From \labelcref{eq:Ssystem} and the continuity of $c$, we easily deduce that $\phi,\psi$ share a common modulus of continuity with $c$ (see \cite{marinoOptimalTransportApproach2020} for details). From an OT viewpoint, \labelcref{eq:Ssystem} simply means that $e^{(\phi \oplus \psi -c)/\eps}\mu^- \otimes \mu^+ \in \Pi(\mu^-,\;\mu^+)$.

We are now ready to state and prove the first main result. Recall that $v_\eps$ denotes the value of \labelcref{eot_pb} and $v_0$ the one of \labelcref{ot_pb}; the dependence on $\mu^+,\mu^- \in \probspace(\R^d)$ is omitted, as they will always be fixed.

\begin{theorem}\label{thm:analytic}
Let $\mu^\pm \in \probspace(\R^d)$ have compact support $X^\pm \subseteq \R^d$ and $c\in   \contspace(X^-\times X^+)$. Then the function $\varepsilon \mapsto v_\varepsilon$ is continuous, non-decreasing and concave on $[0,+\infty)$ ; it is analytic on $(0,+\infty)$.
\end{theorem}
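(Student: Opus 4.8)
The plan is to obtain monotonicity, concavity and continuity by soft convex-analytic manipulations, and analyticity on $(0,+\infty)$ by applying the analytic implicit function theorem to the Schr\"odinger system \labelcref{eq:Ssystem}. For the first three properties: since $\mu^-\otimes\mu^+$ is a probability measure, $\entropy(\gamma\,|\,\mu^-\otimes\mu^+)\ge0$ for every $\gamma\in\Pi(\mu^-,\mu^+)$, so for each fixed $\gamma$ of finite entropy the map $\eps\mapsto\int c\,\dd\gamma+\eps\,\entropy(\gamma\,|\,\mu^-\otimes\mu^+)$ is affine and non-decreasing on $[0,+\infty)$; since $c$ is bounded on the compact set $X^-\times X^+$, plans of infinite entropy never realize the infimum when $\eps>0$, so on $(0,+\infty)$ the function $\eps\mapsto v_\eps$ is an infimum of affine non-decreasing functions, hence concave, non-decreasing and, being finite (take $\gamma=\mu^-\otimes\mu^+$), continuous there. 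Finally \labelcref{eq:small-time} identifies $v_0$ with $\lim_{\eps\downarrow0}v_\eps$, which propagates concavity, monotonicity and continuity to all of $[0,+\infty)$.

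For analyticity, fix $\eps_0>0$. By \labelcref{eq:veps-rewritten} it suffices to prove that $\eps\mapsto(\phi_\eps,\psi_\eps)$ — the Schr\"odinger potentials, normalized by $\int_{X^-}\phi_\eps\,\dd\mu^-=0$ to remove the $(\lambda,-\lambda)$ gauge — is real-analytic near $\eps_0$ as a map into $\contspace(X^-)\times\contspace(X^+)$. The pair $(\phi_\eps,\psi_\eps)$ is the unique solution of \labelcref{eq:Ssystem}, which we encode as $\Phi(\eps,\phi,\psi)=0$ for the map $\Phi$ built from the two softmin equations. The analytic input is that the kernel $(x,y)\mapsto e^{-c(x,y)/\eps}$ depends on $z=1/\eps$ through the series $\sum_{n\ge0}(-c)^nz^n/n!$, which converges in $\contspace(X^-\times X^+)$ locally uniformly in $z\in\C$; it is therefore holomorphic in $z$, hence real-analytic in $\eps\in(0,+\infty)$, as a $\contspace(X^-\times X^+)$-valued map. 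Composing with multiplication by bounded exponentials, integration against the fixed measures $\mu^\pm$ and the logarithm — each carried out in a range where it is holomorphic, because near the solution the relevant integrals stay close to positive quantities — one checks that $\Phi$ is real-analytic jointly in $(\eps,\phi,\psi)$ in a neighbourhood of $(\eps_0,\phi_{\eps_0},\psi_{\eps_0})$ and extends holomorphically to a complex neighbourhood.

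It then remains to show that the partial differential $D_{(\phi,\psi)}\Phi(\eps_0,\phi_{\eps_0},\psi_{\eps_0})$ is a topological isomorphism of the gauge-fixed space onto itself. A direct computation identifies it with $\id$ plus an off-diagonal operator built from the two conditional-expectation operators $A$ and $B$ of the optimal plan $\gamma_{\eps_0}$ (the one sending a function of $y$ to its $\gamma_{\eps_0}$-conditional average given $x$, and the symmetric one), so that its invertibility reduces to $1$ not lying in the spectrum of the two-step averaging operator $AB$ on the subspace of $\mu^-$-mean-zero functions. This is where strict convexity of the entropy enters: there $AB$ has spectral radius strictly less than $1$ — equivalently, the Sinkhorn map is a strict contraction in Hilbert's projective metric — whence $\id-AB$ is invertible. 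Granting this, the analytic implicit function theorem produces a real-analytic branch $\eps\mapsto(\phi_\eps,\psi_\eps)$ near $\eps_0$; by uniqueness of the Schr\"odinger potentials it is the branch we seek, and substituting into \labelcref{eq:veps-rewritten} shows $\eps\mapsto v_\eps$ is real-analytic at $\eps_0$. As $\eps_0>0$ was arbitrary, $v_\eps$ is analytic on $(0,+\infty)$.

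The qualitative statements are routine; the real work is the last step. The obstacle I anticipate is setting up a functional-analytic framework in which $\Phi$ is simultaneously well-defined, holomorphic, and has invertible linearization after gauge-fixing: the invertibility of $D_{(\phi,\psi)}\Phi$ is not a soft ellipticity estimate but rests on the quantitative contractivity behind the convergence of Sinkhorn's algorithm, and the holomorphic bookkeeping (staying where the logarithm is single-valued, controlling the equi-continuity of the relevant families) must be done with care.
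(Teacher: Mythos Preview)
Your overall strategy matches the paper's exactly: the monotonicity, concavity and continuity arguments are the same, and analyticity is obtained by applying the analytic implicit function theorem to the Schr\"odinger system after fixing the additive gauge. The substantive difference is in how you justify that the partial linearization $D_{(\phi,\psi)}\Phi$ is an isomorphism.

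You reduce invertibility to a spectral-gap statement --- that the two-step averaging operator $AB$ has spectral radius strictly below $1$ on mean-zero functions --- and appeal to Sinkhorn contraction in the Hilbert projective metric. The paper takes a softer route: it computes the kernel directly by an $L^2$-type energy argument (multiply each linearized equation by $h$ or $k$, integrate against the marginals of the coupling, and sum to obtain $\int (h\oplus k)^2\,\dd Q=0$), finding $\ker D_{(\phi,\psi)}\Phi=\{(\lambda,-\lambda):\lambda\in\R\}$; then, since $c$ is uniformly continuous on the compact set $X^-\times X^+$, Ascoli's theorem makes the off-diagonal averaging operators compact on $\contspace$, so the linearization is a compact perturbation of the identity and the Fredholm alternative upgrades injectivity on the gauge-fixed hyperplane to surjectivity onto the target hyperplane. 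This avoids any quantitative contraction rate: only the kernel computation and compactness are needed. Your route is valid here (the density $e^{(\phi\oplus\psi-c)/\eps}$ is continuous and bounded away from zero on a compact set, so a Doeblin/Birkhoff--Hopf argument does give a gap), but the translation from Hilbert-metric contraction to a sup-norm spectral bound on mean-zero $\contspace$ functions is not entirely immediate, and in any case you only need invertibility of $\id-AB$, not the stronger spectral-radius bound. One small correction: the linearization is not an isomorphism of the gauge-fixed space \emph{onto itself} but from one hyperplane (where $\int\phi\,\dd\mu^-=0$) onto a different one in the codomain (encoding the common total-mass constraint of the two Schr\"odinger equations); the paper calls these $A$ and $B$.
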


\begin{proof}
The convergence of $v_\eps$ to $v_0$ as $\eps \downarrow 0$ is precisely \labelcref{eq:small-time}. The fact that $v$ is non-decreasing follows from the non-negativity of the entropy; concavity is also obvious since $v_\eps$ is an infimum of affine functions of $\eps$.

Analyticity of $v$ on $(0,+\infty)$ can be deduced from the analytic version of the implicit function theorem, adapting the arguments of \cite{carlierDifferentialApproachMultiMarginal2020} as follows. Let $\eps>0$ and, recalling that Schr\"odinger potentials are unique up to the trivial transformation $(\phi, \psi) \mapsto (\phi-\lambda, \psi+\lambda)$ with $\lambda\in \R$, impose the following normalization condition
\begin{equation}\label{normaliz}
\int_{X^-} \phi \dd \mu^-=0.
\end{equation}
Denote then by $(\phi_\eps,\psi_\eps)$ the unique functions satisfying \labelcref{eq:Ssystem} and \labelcref{normaliz} and note that by \labelcref{eq:veps-rewritten} and \labelcref{normaliz} the entropic cost writes as
\begin{equation}\label{exprvpsi}
v_\eps=\int_{X^+} \psi_\eps \dd \mu^+.
\end{equation}
As already noticed, $\phi_\eps$ and $\psi_\eps$ are continuous in $X^-$ and $X^+$ respectively, whence the validity of both equations in \labelcref{eq:Ssystem} in a pointwise sense. This means that, if we define
\[
\begin{split}
F(\eps, \phi, \psi)(x) & \coloneqq e^{\frac{\phi(x)}{\eps}} \int_{X^+} e^{\frac{-c(x,y)+\psi(y)}{\eps}} \dd \mu^+(y), \qquad x \in X^- \\
G(\eps, \phi, \psi)(y) & \coloneqq e^{\frac{\psi(y)}{\eps}} \int_{X^-} e^{\frac{-c(x,y)+\phi(x)}{\eps}} \dd \mu^-(x), \qquad y \in X^+
\end{split}
\]
the Schr\"odinger system \labelcref{eq:Ssystem} is equivalent to
\[
F(\eps, \phi_\eps, \psi_\eps)(x) = 1 \quad (\forall x\in X^-), \qquad \textrm{and} \qquad G(\eps, \phi_\eps, \psi_\eps)(y) = 1 \quad (\forall y\in X^+).
\]
Finally, define the spaces
\[
A \coloneqq \{(\phi, \psi) \in  \contspace(X^-)\times \contspace(X^+) \; : \; \mbox{ (\ref{normaliz}) holds}\}
\]
and
\[
B \coloneqq \Bigg\{(f, g) \in  \contspace(X^-)\times \contspace(X^+) \; :   \int_{X^-} f \dd \mu^-=\int_{X^+} g \dd \mu^+\Bigg\},
\]
which are Banach spaces when equipped with the uniform norm, and the map $S : (0, +\infty)\times \contspace(X^-)\times \contspace(X^+)\to \contspace(X^-)\times \contspace(X^+)$
\[
S(\eps, \phi, \psi) \coloneqq (F(\eps, \phi, \psi), G(\eps, \phi, \psi)).
\]
Let us observe that $S$ is analytic, takes values in $B$ and the pair of normalized Schr\"odinger potentials $(\phi_\eps, \psi_\eps)$ is implicitly defined by
\[
(\phi_\eps, \psi_\eps)\in A \quad \textrm{and} \quad S(\eps, \phi_\eps, \psi_\eps)=(1,1).
\]
We claim that for every $\eps>0$ and $(\phi, \psi)\in A$, the derivative of $S$ w.r.t.\ $(\phi,\psi)$ at $(\eps, \psi, \phi)$ is an isomorphism between $A$ and $B$, analyticity of $\eps\mapsto (\phi_\eps, \psi_\eps)$ will then follow from the implicit function theorem in the analytic case. To ease notations assume $\eps=1$ and let $(\phi, \psi)\in A$ be fixed. Defining 
\[
S=S(1,\cdot, \cdot) = (F(1,\cdot, \cdot), G(1,\cdot , \cdot)), \qquad \tilde S \coloneqq (\log (F), \log(G)),
\]
and
\[
L \coloneqq S'(\phi, \psi) = (L_1, L_2), \qquad \tilde L \coloneqq \tilde S'(\phi, \psi) = (\tilde L_1, \tilde L_2), 
\]
for $(h, k)\in \contspace(X^-) \times \contspace(X^+)$ we then have
\[ 
\tilde L_1(h,k)(x)=h(x) + \frac{\displaystyle{\int_{X^+} e^{-c(x,y)+ \psi(y)} k(y) \dd \mu^+(y)}}{\displaystyle{\int_{X^+} e^{-c(x,y)+ \psi(y)} \dd \mu^+(y)}}, \quad L_1(h, k) = F(\phi, \psi)  \tilde L_1(h,k),\]
and
\[
\tilde L_2(h,k)(y)=k(y) + \frac{\displaystyle{\int_{X^-} e^{-c(x,y)+ \phi(x)} h(x) \dd \mu^-(x)}}{\displaystyle{\int_{X^-} e^{-c(x,y)+ \phi(x)} \dd \mu^-(x)}}, \quad  L_2(h, k) = G(\phi, \psi)  \tilde L_2(h,k). 
\]
We can write in a more synthetic way $\tilde L$ as
\begin{equation}\label{tlcondit}
\tilde L_1(h,k)(x)=h(x)+ \int_{X^+} k(y) \dd Q^x(y), \quad \tilde L_2(h,k)(y)=k(y)+ \int_{X^-} h(x) \dd Q_y(x),
\end{equation}
where $Q^x$ and $Q_y$ are the disintegration measures of the probability measure
\[\dd Q (x,y)\coloneqq\frac{e^{\phi(x)+\psi (y)-c(x,y)} \dd \mu^{-}(x)  \dd \mu^{+}(y)}{\displaystyle{\int_{X^-\times X^+} e^{\phi(x')+\psi (y')-c(x',y')} \dd \mu^{-}(x') \dd \mu^{+}(y')}}
\]
with respect to its first marginal $\alpha$ and second marginal $\beta$, that is:
\begin{equation}\label{disintQ}
Q =\alpha \otimes Q^x= Q_y \otimes \beta
\end{equation}
(in other words given $(X,Y)$ with law $Q$, $Q^x$ is the conditional law of $Y$ given $X=x$ and $Q_y$ is the conditional law of $X$ given $Y=y$). Assume now that $(h,k) \in \contspace(X^-) \times \contspace(X^+)$ belongs to $\ker(L)=\ker(\tilde L)$, i.e.
\begin{equation}\label{hkinkernel1}
h(x)+ \int_{X^+} k(y) \dd Q^x(y)=0, \qquad \forall x\in X^-=\spt{\alpha}
\end{equation}
and
\begin{equation}\label{hkinkernel2}
k(y)+ \int_{X^-} h(x) \dd Q_y(x)=0, \qquad \forall y\in X^+=\spt{\beta}
\end{equation}
where $X^-=\spt{\alpha}$ and $X^+=\spt{\beta}$ follow from the equivalence of $Q$ with $\mu^- \otimes \mu^+$ (in the sense that they have the same negligible sets). Multiplying \labelcref{hkinkernel1} by $h(x)$ and integrating w.r.t.\ $\alpha$, multiplying \labelcref{hkinkernel2} by $k(y)$ and integrating w.r.t.\ $\beta$, and summing the two, by \labelcref{disintQ} we get
\[ 
\int_{X^-\times X^+} (h(x)+k(y))^2 \dd Q(x,y) = 0
\]
from which we deduce that $h(x)+ k(y)=0$ for $\mu^{-} \otimes \mu^+$-a.e.\ $(x,y)$, that is $h$ and $k$ are two constants that sum to $0$. We thus have $\ker(\tilde L) = \ker(L) = \{(\lambda, -\lambda) \,:\, \lambda \in \R\}$ 
and in particular $\ker(L) \cap A=\{0\}$, so that $L$ is injective on $A$. Since $c$ is uniformly continuous on $X^- \times X^+ $, all functions in the image of $\tilde L$ are equi-continuous and equi-bounded, and it follows from Ascoli's Theorem that $\tilde L$ is a compact perturbation of the identity map on $\contspace(X^-) \times \contspace(X^+)$. This enables us to invoke the Fredholm alternative theorem to deduce from the fact that  $\ker(\tilde L)$ is a line that $\tilde L(\contspace(X^-)\times \contspace(X^+))=\tilde L(A)$ is a hyperplane of $\contspace(X^-)\times \contspace(X^+)$. Since $\tilde L(A)$  is isomorphic to $L(A)$, and $L(A)$ is included in the hyperplane $B$ (because the original non-linear map $S$ takes values in $B$), we have $L(A)=B$. 

Hence $L$ is an isomorphism between $A$ and $B$ and, as anticipated, the implicit function theorem for analytic maps (see chapter 4 of \cite{buffoniAnalyticTheoryGlobal2016}) therefore implies that $(\phi_\eps, \psi_\eps)$ depends analytically on $\eps>0$. In particular $(0,+\infty) \ni \eps \mapsto \psi_\eps \in \contspace(X^-)$ is analytic too, whence the analyticity of $v$ on $(0, +\infty)$ thanks to \labelcref{exprvpsi}. 
\end{proof}

\begin{remark}
The entropic cost $\eps \mapsto v_\eps$ being analytic on $(0,+\infty)$, it can be extended in an analytic fashion to some complex values of $\eps$. Note  that the proof above shows that the entropic cost is also an analytic function of the transport cost $c$.
\end{remark}

\begin{remark}\label{rmk:general-setting}
Even if Theorem \ref{thm:analytic} is stated in the Euclidean setting, it is worth stressing that it is in fact true in a much more general framework. Indeed, it is not difficult to see that the previous proof works verbatim over compact metric spaces and an even higher level of generality is possible. More precisely, not even a topological structure is needed. It is indeed sufficient that $c \in L^\infty(\mu^- \otimes \mu^+)$ with $\mu^-,\mu^+$ probability measures on arbitrary measurable spaces, as it is the case in \cite{carlierDifferentialApproachMultiMarginal2020}. However, with no topological structure the proof of the analyticity of $\eps \mapsto v_\eps$ would be more involved.
\end{remark}

Knowing that all order derivatives of $\eps \mapsto v_\eps$ exist on $(0,+\infty)$, it is natural to look for an explicit expression for them. The case of the first derivative is particularly easy to handle, as shown below. 

\begin{corollary}\label{cor:taylor}
Under the same assumptions of Theorem \ref{thm:analytic}, it holds
\begin{equation}\label{eq:first-der}
v_\eps' = \entropy(\gamma_\eps\,|\,\mu^- \otimes \mu^+), \qquad \forall \eps > 0.
\end{equation}
Besides, if there exists $\gamma_0 \in \Opt(\mu^- \otimes \mu^+)$ with $\entropy(\gamma_0\,|\,\mu^- \otimes \mu^+) < +\infty$, then $\eps \mapsto v_\eps$ belongs to $\contspace^1([0,+\infty))$ with right derivative at $\eps=0$ given by
\begin{equation}\label{eq:right-der0}
v_\eps'|_{\eps=0} = \inf_{\gamma \in \Opt(\mu^-,\mu^+)}\entropy(\gamma\,|\,\mu^- \otimes \mu^+),
\end{equation}
in other words:
\begin{equation}\label{eq:taylor}
v_\eps - v_0 = \eps\inf_{\gamma \in \Opt(\mu^-,\mu^+)}\entropy(\gamma\,|\,\mu^- \otimes \mu^+) + o(\eps).
\end{equation}
Moreover, in that case, $\gamma_\eps$ converges narrowly to the unique coupling $\gamma^\ast$ of minimal entropy in $\Opt(\mu^-,\mu^+)$, namely
\[
\gamma^\ast = \argmin_{\gamma \in \mathrm{Opt}(\mu^-,\mu^+)} \entropy(\gamma\,|\,\mu^- \otimes \mu^+).
\]
\end{corollary}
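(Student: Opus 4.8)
The plan is to lean entirely on two facts from \Cref{thm:analytic}: that $\eps\mapsto v_\eps$ is concave on $[0,+\infty)$ — being the infimum of the affine maps $\eps\mapsto\int c\,\dd\gamma+\eps\,\entropy(\gamma\,|\,\mu^-\otimes\mu^+)$ — and differentiable (indeed analytic) on $(0,+\infty)$, together with the $\Gamma$-convergence relation \labelcref{eq:small-time}. The identity \labelcref{eq:first-der} is then an envelope (Danskin-type) identity: since $c$ is bounded on the compact $X^-\times X^+$ and $v_\eps\le\int c\,\dd(\mu^-\otimes\mu^+)<+\infty$, one has $\entropy(\gamma_\eps\,|\,\mu^-\otimes\mu^+)=\eps^{-1}\big(v_\eps-\int c\,\dd\gamma_\eps\big)\in[0,+\infty)$, and for every $\eps'>0$ inserting $\gamma_\eps$ as a competitor at noise level $\eps'$ gives $v_{\eps'}\le\int c\,\dd\gamma_\eps+\eps'\,\entropy(\gamma_\eps\,|\,\mu^-\otimes\mu^+)=v_\eps+(\eps'-\eps)\,\entropy(\gamma_\eps\,|\,\mu^-\otimes\mu^+)$. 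Hence $\entropy(\gamma_\eps\,|\,\mu^-\otimes\mu^+)$ is a supergradient of the concave function $v$ at $\eps$, and differentiability at $\eps$ forces it to equal $v_\eps'$.

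For the behaviour at $0$, concavity of $v$ together with $\lim_{\eps\downarrow0}v_\eps=v_0$ makes the slope $\eps\mapsto(v_\eps-v_0)/\eps$ non-increasing, so $v_+'(0)\coloneqq\lim_{\eps\downarrow0}(v_\eps-v_0)/\eps=\sup_{\eps>0}(v_\eps-v_0)/\eps$ exists in $(-\infty,+\infty]$. Testing \labelcref{eot_pb} with any $\gamma_0\in\Opt(\mu^-,\mu^+)$ of finite entropy yields $v_\eps\le v_0+\eps\,\entropy(\gamma_0\,|\,\mu^-\otimes\mu^+)$, whence $v_+'(0)\le\inf_{\gamma\in\Opt(\mu^-,\mu^+)}\entropy(\gamma\,|\,\mu^-\otimes\mu^+)\eqqcolon I<+\infty$. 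For the reverse inequality, fix $\eps_n\downarrow0$; as all $\gamma_{\eps_n}$ are concentrated on the fixed compact $X^-\times X^+$, up to a subsequence they narrowly converge to some $\gamma^\ast\in\Pi(\mu^-,\mu^+)$; continuity of $c$ gives $\int c\,\dd\gamma^\ast=\lim\int c\,\dd\gamma_{\eps_n}\le\lim v_{\eps_n}=v_0$, so $\gamma^\ast\in\Opt(\mu^-,\mu^+)$, while $\int c\,\dd\gamma_{\eps_n}\ge v_0$ gives $(v_{\eps_n}-v_0)/\eps_n\ge\entropy(\gamma_{\eps_n}\,|\,\mu^-\otimes\mu^+)$. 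Lower semicontinuity of the relative entropy then yields $v_+'(0)=\lim_n(v_{\eps_n}-v_0)/\eps_n\ge\liminf_n\entropy(\gamma_{\eps_n}\,|\,\mu^-\otimes\mu^+)\ge\entropy(\gamma^\ast\,|\,\mu^-\otimes\mu^+)\ge I$. Hence $v_+'(0)=I$, which is \labelcref{eq:right-der0}, and \labelcref{eq:taylor} is just its first-order reformulation.

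It remains to upgrade this to $\contspace^1$ regularity at $0$ and to identify the limit of $\gamma_\eps$. The supergradient inequality for the concave $v$ gives $v_\eps'\le(v_\eps-v_0)/\eps$, which is $\le I$ and tends to $I$ as $\eps\downarrow0$; thus $\limsup_{\eps\downarrow0}v_\eps'\le I$. Conversely, if $v_{\eps_n}'=\entropy(\gamma_{\eps_n}\,|\,\mu^-\otimes\mu^+)$ converged along some subsequence to a value $<I$, the extraction of the previous paragraph would produce an optimal plan of entropy $<I$, a contradiction; hence $\liminf_{\eps\downarrow0}v_\eps'\ge I$ and $v\in\contspace^1([0,+\infty))$ with $v'(0)=I$. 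Finally, $\Opt(\mu^-,\mu^+)$ is convex and narrowly compact and $\entropy(\cdot\,|\,\mu^-\otimes\mu^+)$ is lower semicontinuous and strictly convex with a finite infimum $I$ on it, so it has a unique minimizer $\gamma^\ast$; re-running the extraction, every subsequence of $(\gamma_\eps)_{\eps\downarrow0}$ has a further subsequence narrowly converging to some optimal plan whose entropy is $\le\lim_{\eps\downarrow0}\entropy(\gamma_\eps\,|\,\mu^-\otimes\mu^+)=I$, hence equal to $\gamma^\ast$, and therefore $\gamma_\eps$ narrowly converges to $\gamma^\ast$.

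I expect the delicate point to be the $\contspace^1$-regularity at the origin: one-sided differentiability at $\eps=0$ is essentially free from concavity, but continuity of $\eps\mapsto v_\eps'$ up to $0$ genuinely requires the envelope formula $v_\eps'=\entropy(\gamma_\eps\,|\,\mu^-\otimes\mu^+)$ combined with lower semicontinuity of the entropy along a narrowly convergent sequence of optimal entropic plans — the very mechanism that simultaneously pins down the limiting plan $\gamma^\ast$ as the minimal-entropy element of $\Opt(\mu^-,\mu^+)$.
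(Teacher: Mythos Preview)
Your proof is correct and follows essentially the same approach as the paper: the envelope/supergradient argument for \labelcref{eq:first-der}, the two-sided estimate on $(v_\eps-v_0)/\eps$ via competitor testing and lower semicontinuity of the entropy, and the $\contspace^1$-at-$0$ argument combining concavity (for $\limsup$) with lower semicontinuity along narrowly convergent subsequences (for $\liminf$). The paper invokes the envelope theorem and $\Gamma$-convergence more tersely where you spell out the supergradient inequality and the compactness extraction explicitly, but the underlying mechanism is identical.
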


\begin{proof}
Knowing that $\eps \mapsto v_\eps$ is analytic ($\contspace^1$ would be enough), \labelcref{eq:first-der} is a consequence of the envelope theorem and of the uniqueness of the minimizer of \labelcref{eot_pb}.

As concerns the second part of the statement, define
\[
C_\eps(\gamma) \coloneqq \int_{X^- \times X^+} c\,\dd\gamma + \eps\entropy(\gamma\,|\,\mu^- \otimes \mu^+)
\]
and observe that $v_\eps-v_0 = C_\eps(\gamma_\eps)-C_0(\gamma) \leq C_\eps(\gamma)-C_0(\gamma) = \eps\entropy(\gamma\,|\,\mu^- \otimes \mu^+)$ for any $\gamma \in \Opt(\mu^-,\mu^+)$, hence
\begin{equation}\label{eq:limsup}
\limsup_{\eps \downarrow 0}\frac{v_\eps-v_0}{\eps} \leq \inf_{\gamma \in \Opt(\mu^-,\mu^+)}\entropy(\gamma\,|\,\mu^- \otimes \mu^+),
\end{equation}
the right-hand side being finite thanks to the existence of $\gamma_0$ as in the statement. For the liminf inequality, recall that \labelcref{eot_pb} $\Gamma$-converges to \labelcref{ot_pb} as $\eps \to 0$, so that up to subsequences $\gamma_\eps$ converges narrowly to $\gamma^* \in \Opt(\mu^-,\mu^+)$. Combining this information with $v_\eps-v_0 \geq C_\eps(\gamma_\eps)-C_0(\gamma_\eps) = \eps\entropy(\gamma_\eps\,|\,\mu^- \otimes \mu^+)$ and the lower semicontinuity of the entropy (see \cite[\S{7.1.2}]{santambrogioOptimalTransportApplied2015}) yields
\[
\liminf_{\eps \downarrow 0}\frac{v_\eps-v_0}{\eps} \geq \entropy(\gamma^*\,|\,\mu^- \otimes \mu^+) \geq \inf_{\gamma \in \Opt(\mu^-,\mu^+)}\entropy(\gamma\,|\,\mu^- \otimes \mu^+),
\]
and this inequality\footnote{By the way, this also shows that the differentiability of $v_\eps$ at $\eps = 0$ is in fact equivalent to the existence of an optimal plan with finite entropy.} together with \labelcref{eq:limsup} implies the right differentiability of $v_\eps$ at $\eps=0$ as well as \labelcref{eq:right-der0}. 

Note that the argument above also proves the existence of a $\gamma^* \in \Opt(\mu^-,\mu^+)$ of minimal entropy towards which $\gamma_\eps$ converges narrowly (up to subsequences) as well as the lower semicontinuity of $v_\eps'$ at $\eps=0$. The uniqueness of such $\gamma^*$ comes from the convexity of $\Opt(\mu^-,\mu^+)$ and the strict convexity of the entropy, hence the convergence of the whole sequence. The upper semicontinuity of $v_\eps'$ at $\eps=0$ is instead a consequence of concavity, so that in conclusion $v_\eps'$ is continuous up to $\eps=0$, namely $v_\eps \in \contspace^1([0,+\infty))$.

\end{proof}

Let us stress that while \labelcref{eq:first-der} is an immediate consequence of Theorem \ref{thm:analytic}, the proof of \labelcref{eq:right-der0} (inspired by \cite{confortiFormulaTimeDerivative2021, monsaingeonDynamicalSchrodingerProblem2020}) requires the existence of an optimal coupling with finite entropy w.r.t.\ $\mu^- \otimes \mu^+$ and the reader may wonder whether this condition is reasonable or not. If $\mu^-$ and $\mu^+$ are finitely-supported measures, the condition is trivially satisfied; hence identities \labelcref{eq:right-der0} and \labelcref{eq:taylor} may be of some interest in this case (in this direction, recall the pioneering work \cite{cominettiAsymptoticAnalysisExponential1994}).

However, as soon as $\mu^-$ and $\mu^+$ have densities, the aforementioned condition is not satisfied in general. In particular, it cannot hold whenever optimal plans are concentrated on $d$-rectifiable sets. To obtain a non-trivial asymptotic expansion of the entropic cost around $\eps=0$ in more general situations, a more accurate study of $v_\eps-v_0$ is thus required, and this will be carried out in Sections \ref{sec:upper} and \ref{sec:lower}.

\section{Upper bound}\label{sec:upper}

Our goal in this section is to establish upper bounds on $v_\eps -v_0$ up to order $O(\eps)$ as $\eps \to 0$. We start with a general upper bound holding for $\contspace^{0,1}$ (i.e. Lipschitz) costs, then we prove a finer upper bound for $\contspace^{1,1}$ costs. The leading order terms, which are proportional to $\eps \log(1/\eps)$, will be shown to be sharp in this section and in \Cref{sec:lower}.  

\subsection{Upper bound for \texorpdfstring{$\contspace^{0,1}$}{C0,1} costs}
The upper bound we are going to establish will depend on the dimension of $\mu^-$ and $\mu^+$.  The natural notion of dimension which arises in our context is the \emph{entropy dimension}, also called \emph{information dimension} or \emph{R\'enyi dimension}, since it was originally introduced by R\'enyi in \cite{renyiDimensionEntropyProbability1959}. The definition that we use corresponds to the one given in \cite{youngDimensionEntropyLyapunov1982}: if $\mu$ is a probability measure over $\R^d$, we set for every  $\delta > 0$,
\[H_\delta(\mu) = \inf \left\{ \sum_{n\in\N} \mu(A_n) \log(1/\mu(A_n)) : \forall n, \diam(A_n) \leq \delta, \text{ and } \R^d = \bigcup_{n\in \N} A_n\right\},\]
where the infimum is taken over countable coverings $(A_n)_{n\in\N}$ of $\R^d$ by Borel subsets, and we define the \emph{lower and upper entropy dimension of $\mu$} respectively by:
\[\underline \dim_E(\mu) \coloneqq \liminf_{\delta \to 0^+} \frac{H_\delta(\mu)}{\log(1/\delta)}, \quad \overline \dim_E(\mu) \coloneqq \limsup_{\delta \to 0^+} \frac{H_\delta(\mu)}{\log(1/\delta)}.\]

When $\mu$ has compact support, notice that its upper entropy dimension is always smaller than the upper box dimension of the support of $\mu$, that is:
\[\overline \dim_E(\mu) \leq \overline \dim_B(\spt{\mu}) \coloneqq \limsup_{\delta\to 0^+} \frac{\log(N_\delta(\spt{\mu}))}{\log(1/\delta)},\]
where for every $A\subseteq \R^d$, $N_\delta(A)$ denotes the box-counting number of $A$ at scale $\delta$, i.e. the minimal number of sets of diameter $\delta > 0$ which cover $A$. Indeed if $(A_1, \ldots, A_{N_\delta(\spt{\mu})})$ is such a covering, by concavity of $t\mapsto t\log(1/t)$, we have
\begin{equation}\label{upper_bound_box_counting_number}
\sum_{1\leq n \leq N_\delta(\spt{\mu})} \mu(A_n) \log(1/\mu(A_n)) \leq \log N_\delta(\spt{\mu}).
\end{equation}
The reader interested in the different notions of dimension of sets and measures may consult \cite{falconerTechniquesFractalGeometry1997}.

\begin{proposition}\label{value_general_upper_bound}
If $\mu^\pm \in \probspace(\R^d)$ and $c \in \contspace^{0,1}(\spt{\mu^-}\times \spt{\mu^+})$, then for every $\eps>0$,
\begin{equation}\label{general_upper_bound_precise}
v_\eps \leq v_0 +\eps (H_\eps(\mu^-) \wedge H_\eps(\mu^+))+  [c]_{\contspace^{0,1}(\spt{\mu^-}\times\spt{\mu^+})} \eps.
\end{equation}
In particular, setting $d^\pm = \overline\dim_E(\mu^\pm)$,
\begin{equation}\label{general_upper_bound_entropy_dimension}
v_\eps \leq v_0 + (d^+\wedge d^-) \eps \log(1/\eps) + o(\eps\log(1/\eps)) \quad \text{as $\eps \to 0$},
\end{equation}
and if $\mu^\pm$ are concentrated on compact submanifolds of dimension $d^\pm$,
\begin{equation}\label{general_upper_bound_manifold_dimension}
v_\eps \leq v_0 + (d^+ \wedge d^-)\eps \log(1/\eps) + O(\eps).
\end{equation}
\end{proposition}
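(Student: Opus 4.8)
The plan is to use the block approximation construction from \cite{carlierConvergenceEntropicSchemes2017}: fix a scale $\delta>0$ and a countable Borel partition $(Q_n)_{n\in\N}$ of $\R^d$ with $\diam(Q_n)\le\delta$ (so, roughly, a cubic grid of mesh $\delta$), and define the competitor plan
\[
\gamma^\delta \coloneqq \sum_{m,n}\frac{\mu^-(Q_m)\,\mu^+(Q_n)}{\ldots}\,(\mu^-\mres Q_m)\otimes(\mu^+\mres Q_n),
\]
but it is cleaner to present the \emph{one-sided} block approximation: keep the plan exact on one marginal and piecewise-product on the other. Concretely, starting from any optimal plan $\gamma_0\in\Opt(\mu^-,\mu^+)$, set $\gamma^\delta \coloneqq \sum_n \frac{1}{\mu^-(Q_n)}(\mu^-\mres Q_n)\otimes(\pi^+_\# (\gamma_0\mres(Q_n\times\R^d)))$, i.e.\ on each slab $Q_n\times\R^d$ replace $\gamma_0$ by the independent coupling of its marginals. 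This $\gamma^\delta$ is admissible (its marginals are $\mu^-$ and $\mu^+$), and one computes its relative entropy explicitly: the independent-coupling structure gives $\entropy(\gamma^\delta\mid\mu^-\otimes\mu^+) = \sum_n \mu^-(Q_n)\log(1/\mu^-(Q_n)) + (\text{a nonpositive term})$, hence $\entropy(\gamma^\delta\mid\mu^-\otimes\mu^+)\le \sum_n \mu^-(Q_n)\log(1/\mu^-(Q_n))$, and taking the infimum over partitions of mesh $\le\delta$ gives the bound by $H_\delta(\mu^-)$.

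The cost term is controlled by the Lipschitz regularity: since $\gamma^\delta$ differs from $\gamma_0$ only by a rearrangement within each slab $Q_n\times\R^d$, and within such a slab the $x$-coordinate moves by at most $\diam(Q_n)\le\delta$, we get
\[
\int c\,\dd\gamma^\delta - \int c\,\dd\gamma_0 \le [c]_{\contspace^{0,1}}\,\delta.
\]
Plugging $\gamma^\delta$ into \labelcref{eot_pb} yields $v_\eps \le v_0 + [c]_{\contspace^{0,1}}\delta + \eps\, H_\delta(\mu^-)$, valid for every $\delta>0$; the symmetric construction on the other marginal gives $H_\delta(\mu^+)$, whence the minimum. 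Choosing $\delta=\eps$ gives \labelcref{general_upper_bound_precise}. A small technical point to handle: slabs $Q_n$ with $\mu^-(Q_n)=0$ must be discarded, and one should check that $H_\delta$ is finite when $\mu^-$ has compact support (which follows from \labelcref{upper_bound_box_counting_number}).

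For the two corollaries, \labelcref{general_upper_bound_entropy_dimension} is immediate: by definition of $d^- = \overline\dim_E(\mu^-)$ one has $H_\eps(\mu^-) = d^-\log(1/\eps) + o(\log(1/\eps))$ as $\eps\to0$, so $\eps H_\eps(\mu^-) = d^-\eps\log(1/\eps)+o(\eps\log(1/\eps))$, and the $[c]_{\contspace^{0,1}}\eps$ term is absorbed; taking the better of the two marginals gives $d^+\wedge d^-$. For \labelcref{general_upper_bound_manifold_dimension}, when $\mu^\pm$ is supported on a compact $d^\pm$-dimensional submanifold, the covering by $N_\delta = O(\delta^{-d^\pm})$ sets of diameter $\delta$ together with \labelcref{upper_bound_box_counting_number} gives $H_\delta(\mu^-)\le \log N_\delta(\spt{\mu^-}) = d^-\log(1/\delta)+O(1)$, so $\eps H_\eps(\mu^-)\le d^-\eps\log(1/\eps)+O(\eps)$, and combined with the $O(\eps)$ Lipschitz term this yields the claim. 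The only step requiring genuine care is the entropy computation for $\gamma^\delta$ — verifying that the ``extra'' term beyond $\sum_n\mu^-(Q_n)\log(1/\mu^-(Q_n))$ is indeed $\le 0$; this is where one uses that conditioning can only decrease relative entropy (equivalently, Jensen applied slab by slab), and it is the heart of why block approximation works. I expect this to be the main obstacle to write cleanly, everything else being bookkeeping.
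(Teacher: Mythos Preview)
Your approach is correct and follows the same block-approximation strategy as the paper: build from an optimal $\gamma_0$ a competitor $\gamma^\delta$ at scale $\delta$, bound the cost defect by $[c]_{\contspace^{0,1}}\delta$ via Lipschitz continuity, bound the relative entropy by $\sum_n \mu^\pm(A_n)\log(1/\mu^\pm(A_n))$, and set $\delta=\eps$. The one genuine difference is the construction of $\gamma^\delta$: the paper uses the \emph{two-sided} block approximation $\gamma^\delta=\sum_{i,j}\gamma_0(A_i\times A_j)\,\mu_i^-\otimes\mu_j^+$ (partitioning both factors), whereas you propose the \emph{one-sided} version, partitioning only the $x$-variable and keeping the conditional $y$-marginals of $\gamma_0$ intact on each slab. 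Both give the same bound for this proposition, and your version makes the ``nonpositive extra term'' in the entropy more transparent (it is just $\sum_n\int g_n\log g_n\,\dd\mu^+\le 0$ since the slab-wise densities satisfy $g_n\le 1$). The paper's two-sided choice pays off later: the same $\gamma^\delta$ is reused in the proof of the $\contspace^{1,1}$ upper bound (\Cref{value_general_upper_bound2}), where one needs $W_\infty(\gamma^\delta,\gamma_0)\le\delta$ in \emph{both} coordinates to exploit the second-order Taylor expansion of $E=c-\phi\oplus\psi$; your one-sided construction moves only the $x$-coordinate and would not directly give that.
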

The proof is based on a generalization of the block approximation defined in \cite[Definition~2.9]{carlierConvergenceEntropicSchemes2017}.
\begin{proof}
 Let $\gamma_0 \in \Opt(\mu^-,\mu^+)$. First of all, let us extend the cost to a function $c \in \contspace^{0,1}(\R^d\times \R^d)$ with same Lipschitz constant $L \coloneqq [c]_{\contspace^{0,1}(\R^d\times \R^d)} = [c]_{\contspace^{0,1}(\spt{\mu^-}\times \spt{\mu^+})}$. We consider a covering $\R^d = \bigcup_{n\in\N} A_n$ of Borel sets such that $\diam(A_n) \leq \delta$ for every $n\in\N$. We set for every $n\in \N$,
\[\mu_n^\pm \coloneqq\begin{dcases*}
\frac {\mu^\pm \mres A_n}{\mu^\pm(A_n)}& if $\mu^\pm(A_n) > 0$,\\
0& otherwise,
\end{dcases*}\]
then for every pair $i,j \in \N$, 
\[(\gamma_0)_{i,j} \coloneqq \gamma_0(A_i \times A_j) \mu_i^- \otimes \mu_j^+,\]
and finally,
\[\gamma^\delta \coloneqq \sum_{i,j\in \N} (\gamma_0)_{i,j}.\]
By definition, $\gamma^\delta \ll \mu^- \otimes \mu^+$ and we may check that its marginals are $\mu^\pm$, for instance:
\[(\pi_1)_\#\gamma^\delta = \sum_{i,j\in\N} (\pi_1)_\#  (\gamma_0(A_i \times A_j) \mu_i^- \otimes \mu_j^+) = \sum_{i,j\in\N}\gamma_0(A_i\times A_j) \mu_i^- = \sum_{i\in\N} \mu^-(A_i) \mu_i^- = \mu^-.\] 
Besides, for $\mu^-\otimes\mu^+$-almost every $(x,y) \in A_i\times A_j$,
\[\frac{\dd \gamma^\delta}{\dd\mu^-\otimes \mu^+}(x,y) = \frac{\gamma_0(A_i \times A_j)}{\mu^-(A_i) \mu^+(A_j)} \quad \text{if } \mu^-(A_i) \mu^+(A_j) > 0,\]
and it is $0$ otherwise. Let us compute its entropy, summing only over indices $i,j$ such that $\mu^-(A_i) > 0$ and $\mu^+(A_j) > 0$.
\begin{align*}
\entropy(\gamma^\delta \,|\, \mu^- \otimes \mu^+) &= \sum_{i,j\in \N} \int_{A_i \times A_j} \log\left(\frac{\gamma_0(A_i \times A_j)}{\mu^-(A_i) \mu^+(A_j)}\right) \dd \gamma^\delta\\
&= \sum_{i,j\in\N} \gamma_0(A_i \times A_j) \log\left(\frac{\gamma_0(A_i \times A_j)}{\mu^-(A_i) \mu^+(A_j)}\right)\\
&= \sum_{i,j\in\N} \gamma_0(A_i\times A_j) \log\left(\frac{\gamma_0(A_i\times A_j)}{\mu^-(A_i)}\right)+ \sum_{j\in\N} \mu^+(A_j) \log(1/\mu^+(A_j))\\
& \leq \sum_{j\in\N} \mu^+(A_j) \log(1/\mu^+(A_j)),
\end{align*}
the last inequality coming from the inequality $\gamma_0(A_i\times A_j) \leq \mu^-(A_i)$. Taking arbitrary coverings $(A_n)_{n\in\N}$ of diameter smaller than $\delta$, we get by definition of $H_\delta$,
\[\entropy(\gamma^\delta \,|\, \mu^- \otimes \mu^+)\leq H_\delta(\mu^+).\]
Notice that $W_\infty(\gamma^\delta,\gamma_0) \leq \delta$, where $W_p$ denotes the $p$-Wasserstein distance\footnote{For measures on $\R^d\times \R^d$, we consider the Wasserstein distance with respect to the norm that we introduced, namely $\norm{(x,y)} = \max \{\abs{x},\abs{y}\}$ for every $(x,y) \in \R^d\times \R^d$.} between compactly supported probability measures for $p\in [1,\infty]$.
As a consequence, taking $\gamma^\delta$ as competitor in \labelcref{eot_pb}, since $W_1\leq W_\infty$ we obtain
\begin{equation}\label{upper_bound_eps_delta}
\begin{aligned}
v_\eps \leq \int_{\R^d\times\R^d} c \dd\gamma^\delta + \eps H_\delta(\mu^+) &= v_0 + \int_{\R^d\times\R^d} c \dd(\gamma^\delta - \gamma_0) + \eps H_\delta(\mu^+)\\
&\leq v_0 + L W_1(\gamma^\delta,\gamma_0) + \eps H_\delta(\mu^+)\\
&\leq v_0 + L \delta  + \eps H_\delta(\mu^+).
\end{aligned}
\end{equation}
Taking $\delta = \eps$ and interchanging the roles of $\mu^-$ and $\mu^+$ yields \labelcref{general_upper_bound_precise}, and \labelcref{general_upper_bound_entropy_dimension} follows from the definition of the upper entropy dimension. Finally, if $\mu^\pm$ are concentrated on compact $d^\pm$-dimensional manifolds $M^\pm$, knowing that their box counting number $N_\eps(M^\pm)$ is bounded from above by $C/\eps^{d^\pm}$ for some constant $C > 0$ and every $\eps \in(0,1)$, applying \labelcref{upper_bound_box_counting_number} to $\mu^\pm$ yields $H_\eps(\mu^\pm) \leq d^\pm\log(1/\eps) + \log C$, thus
\[v_\eps \leq v_0 + (d^-\wedge d^+) \eps\log(1/\eps) + ( L + \log C)\eps\]
and \labelcref{general_upper_bound_manifold_dimension} is proved.
\end{proof}

\begin{remark}\label{rmk:unif-continuous}
If the cost $c$ is only assumed to be uniformly continuous on $\spt{\mu^-}\times\spt{\mu^+}$, with modulus of continuity $\omega_c : \delta \mapsto \sup_{\norm{\mbf x-\mbf x'}\leq \delta} \abs{c(\mbf x)-c(\mbf x')}$, a straightforward adaptation of the above proof shows that
\[v_\eps \leq v_0 + (d^+\wedge d^- + o(1)) \eps \log(1/\omega_c^{-1}(\eps)),\]
and if $\mu^\pm$ are concentrated on $d^\pm$-dimensional submanifolds,
\[v_\eps \leq v_0 + (d^+\wedge d^-) \eps \log(1/\omega_c^{-1}(\eps)) + O(\eps),\]
where $\omega_c^{-1}$ is the generalized inverse defined by $\omega_c^{-1}(t) \coloneqq \inf \{ s\geq 0 : \omega_c(s) \geq t\}$.
\end{remark}

The following easy example shows that the leading order term is in general sharp in the class of $\contspace^{0,1}$ cost functions.

\begin{example}\label{ex:sharpness}
Consider $\mu^- = \mu^+ = \lbm \mres[0,1]$ and $c(x,y) \coloneqq \abs{x-y}$ for $x,y\in [0,1]$. Then
\[v_\eps \geq v_0 + \eps \log(1/\eps).\]
Indeed, the unique optimizer to \labelcref{ot_pb} is $\gamma_0 = (\id,\id)_\# \mu^-$ with cost $v_0 = 0$, and the pair $(\phi_0,\psi_0) = (0,0)$ is a solution to the dual problem \labelcref{dual_ot_pb}. We use $(\phi_0,\psi_0)$ as a competitor in \labelcref{dual_eot_pb} to get
\[v_\eps \geq -\eps \log\left(\int_{[0,1]^2} e^{-\frac{\abs{y-x}}\eps} \dd x \dd y\right).\]
We have
\begin{align*}
\int_{[0,1]^2} e^{-\frac{\abs{y-x}}\eps} \dd x \dd y = 2 \int_0^1 \int_0^y e^{-\frac t\eps} \dd t \dd y&=2 \int_0^1 \eps (1-e^{-\frac y\eps}) \dd y =2 \eps(1-\eps(1-e^{-\frac 1\eps})) \leq 2 \eps,
\end{align*}
thus for every $\eps > 0$,
\[v_\eps \geq -\eps \log 2\eps= v_0 +\eps\log(1/\eps)+O(\eps),\]
and together with \labelcref{general_upper_bound_manifold_dimension}, since $d^\pm = 1$, we get $v_\eps-v_0 = \eps\log(1/\eps) + O(\eps)$.
\end{example}

\subsection{Upper bound for \texorpdfstring{$\contspace^{1,1}$}{C1,1} costs and \texorpdfstring{$L^\infty$}{Linfty} marginals}\label{sec:fine_upper_bound}

Building upon the same block approximation used to establish \Cref{value_general_upper_bound}, we provide a finer upper bound when the cost is of class $\contspace^{1,1}$ and $\mu^\pm$ have $L^\infty$ densities.

\begin{proposition}\label{value_general_upper_bound2}
Let $c\in \contspace_\loc^{1,1}(\Omega^- \times \Omega^+)$ where $\Omega^\pm \subseteq \R^d$ are open convex sets and $\mu^\pm \in L^\infty(\Omega^\pm)$ be two probability measures compactly supported in $\Omega^\pm$. Then there exists a constant $M\geq 0$ such that for every $\eps \in (0,1)$,
\begin{equation}\label{fine_upper_bound}
v_\eps \leq v_0 +\frac d2 \eps\log(1/\eps)+  M \eps.
\end{equation}
\end{proposition}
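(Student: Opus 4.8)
The strategy is to reuse the block approximation $\gamma^\delta$ built in the proof of \Cref{value_general_upper_bound}, but to exploit the extra $\contspace^{1,1}$ regularity of $c$ to get a cost estimate of order $\delta^2$ instead of $\delta$ on each block, and to choose the blocks to be cubes of side $\delta$ so that the entropy term contributes exactly $\frac d2 \log(1/\delta)$. More precisely, I would cover (a neighbourhood of $\spt\mu^-\cup\spt\mu^+$ inside $\Omega^-\cap\Omega^+$, or rather cover $\spt\mu^-$ and $\spt\mu^+$ separately) by a grid of half-open cubes $(Q_n)_n$ of side length $\delta$, so that $N_\delta \lesssim \delta^{-d}$ cubes meet the supports; since $\mu^\pm$ have $L^\infty$ densities, $\mu^\pm(Q_n)\leq \|\mu^\pm\|_\infty \delta^d$, hence by the same computation as before
\[
\entropy(\gamma^\delta\,|\,\mu^-\otimes\mu^+)\leq \sum_{j}\mu^+(Q_j)\log(1/\mu^+(Q_j)) \leq d\log(1/\delta)+\log\|\mu^+\|_\infty =: d\log(1/\delta)+C_1.
\]
Wait — this only gives $d$, not $d/2$. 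The gain of the factor $1/2$ is the real point, and it comes from the cost side: the naive bound $\int c\,\dd(\gamma^\delta-\gamma_0)\leq L\delta$ must be improved to $O(\delta^2)$, and then one optimizes $\delta$ against $\eps$ differently.

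Here is the key step. Fix an optimal plan $\gamma_0\in\Opt(\mu^-,\mu^+)$ and Kantorovich potentials $(\phi,\psi)$ which are $\contspace^{0,1}$ (hence, by $c\in\contspace^{1,1}$ and the standard argument, one can even take $\phi\in\contspace^{1,1}$ on a neighbourhood of $\spt\mu^-$; similarly $\psi$). On $\spt\gamma_0$ we have $c(x,y)=\phi(x)+\psi(y)$ and $c\geq \phi\oplus\psi$ everywhere, so the duality gap $E(x,y)\coloneqq c(x,y)-\phi(x)-\psi(y)$ is nonnegative, vanishes on $\spt\gamma_0$, and is $\contspace^{1,1}$; at any $(x_0,y_0)\in\spt\gamma_0$ that is a point of differentiability, $\nabla E(x_0,y_0)=0$, whence by the $\contspace^{1,1}$ bound $E(x,y)\leq \frac{[\nabla E]_{\mathrm{Lip}}}{2}\,\dist((x,y),\spt\gamma_0)^2$. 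Since $\gamma^\delta$ is supported within the $\delta$-neighbourhood (in the $\|\cdot\|$ norm) of $\spt\gamma_0$ — because each block of $\gamma^\delta$ sits in $Q_i\times Q_j$ with $\gamma_0(Q_i\times Q_j)>0$, so $Q_i\times Q_j$ meets $\spt\gamma_0$ and has diameter $\leq\delta$ — we get $E\leq C_2\delta^2$ on $\spt\gamma^\delta$. Therefore
\[
\int c\,\dd\gamma^\delta = \int (\phi\oplus\psi)\,\dd\gamma^\delta + \int E\,\dd\gamma^\delta \leq \int(\phi\oplus\psi)\,\dd\gamma_0 + C_2\delta^2 = v_0 + C_2\delta^2,
\]
where I used that $\gamma^\delta$ has the same marginals as $\gamma_0$ to kill the linear term exactly. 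Feeding this and the entropy bound into \labelcref{eot_pb} with $\gamma^\delta$ as competitor,
\[
v_\eps \leq v_0 + C_2\delta^2 + \eps\big(d\log(1/\delta)+C_1\big).
\]
Now optimize in $\delta$: the choice $\delta=\sqrt\eps$ (valid for $\eps<1$) gives $C_2\delta^2=C_2\eps$ and $d\log(1/\delta)=\frac d2\log(1/\eps)$, hence $v_\eps\leq v_0+\frac d2\eps\log(1/\eps)+(C_1+C_2)\eps$, which is \labelcref{fine_upper_bound} with $M=C_1+C_2$.

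The main obstacle — and the place where \Cref{lem:alexandrov-modified} (the "integral variant of Alexandrov's theorem") is presumably needed — is the pointwise second-order bound $E\leq C\dist(\cdot,\{E=0\})^2$: under mere $\contspace^{1,1}$ regularity, $E$ is twice differentiable only a.e.\ (Alexandrov), so one cannot naively Taylor-expand at every base point. The honest argument is to fix a differentiability/second-differentiability point $(x_0,y_0)$ of $E$ on $\spt\gamma_0$, use $E(x_0,y_0)=0$, $\nabla E(x_0,y_0)=0$ and the Lipschitz bound on $\nabla E$ to write $E(\mbf z)\leq E(\mbf z_0)+\nabla E(\mbf z_0)\cdot(\mbf z-\mbf z_0)+\tfrac12[\nabla E]_{\mathrm{Lip}}\|\mbf z-\mbf z_0\|^2=\tfrac12[\nabla E]_{\mathrm{Lip}}\|\mbf z-\mbf z_0\|^2$ — this actually needs only $\contspace^{1,1}$, not Alexandrov, for a fixed $\mbf z_0$. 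The subtlety that does require care (and presumably the integral Alexandrov lemma) is that $\gamma_0$ is not concentrated on a nice graph, so to bound $\int E\,\dd\gamma^\delta$ one needs, block by block, a point of $\spt\gamma_0$ in $Q_i\times Q_j$ where the gradient vanishes; since $\spt\gamma_0$ can be $d$-rectifiable rather than $0$-dimensional, one must argue that $\nabla E$ vanishes on $\spt\gamma_0$ up to a set negligible for the relevant projections, which is exactly the content one extracts from an integral Alexandrov-type statement. I would invoke \Cref{lem:alexandrov-modified} to make this rigorous, keeping the rest of the argument as the clean $\delta=\sqrt\eps$ computation above.
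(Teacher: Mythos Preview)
Your overall architecture is exactly the paper's: block approximation $\gamma^\delta$, use that $\gamma^\delta$ and $\gamma_0$ share marginals so $\int c\,\dd\gamma^\delta - v_0 = \int E\,\dd\gamma^\delta$, show this is $O(\delta^2)$, combine with $H_\delta\le d\log(1/\delta)+O(1)$, and take $\delta=\sqrt\eps$. That part is fine and matches the paper verbatim.

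The genuine gap is your claim that ``by $c\in\contspace^{1,1}$ and the standard argument, one can even take $\phi\in\contspace^{1,1}$''. This is false. A $c$-conjugate potential $\phi(x)=\inf_y\bigl(c(x,y)-\psi(y)\bigr)$ is an infimum of $\contspace^{1,1}$ functions, which makes it $\lambda$-\emph{concave} (semiconcave), but not $\contspace^{1,1}$: already for $c(x,y)=-xy$ on $[-1,1]^2$ and $\psi\equiv 0$ one gets $\phi(x)=-|x|$. Upgrading $\phi$ to $\contspace^{1,1}$ would amount to Caffarelli-type regularity, which is not available under the bare $L^\infty$ hypotheses. Consequently $E=c-\phi\oplus\psi$ is \emph{not} $\contspace^{1,1}$, $\nabla E$ is not Lipschitz, and your pointwise inequality $E(\mbf z)\le \tfrac12[\nabla E]_{\mathrm{Lip}}\|\mbf z-\mbf z_0\|^2$ is unjustified. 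The one-sided semiconcavity of $\phi,\psi$ gives a \emph{lower} bound on the Taylor remainder of $-\phi,-\psi$, which is the wrong sign for what you need.

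This is precisely what \Cref{lem:alexandrov-modified} is for, and you have misdiagnosed its role. The issue is not locating points of $\spt\gamma_0$ where $\nabla E=0$ --- that holds $\gamma_0$-a.e.\ for free since $\phi,\psi$ are differentiable Lebesgue-a.e.\ and $\mu^\pm\ll\lbm^d$. The issue is that for semiconcave (non-$\contspace^{1,1}$) $\phi$, the remainder $|\phi(x)-T_{x_0}\phi(x)|$ can be of order $|x-x_0|$ at individual base points $x_0$; it is only \emph{on average over $x_0$ with respect to Lebesgue measure} that one recovers $O(r^2)$. The paper therefore splits $E(\mbf x)-T_{\mbf{x_0}}E(\mbf x)$ into the $c$-part (genuinely $\contspace^{1,1}$, pointwise $\le\tfrac\lambda2|\mbf x-\mbf{x_0}|^2$) and the $-\phi$, $-\psi$ parts, and bounds the latter via \Cref{lem:alexandrov-modified} after using $\mu^\pm\in L^\infty$ to dominate $\dd\mu^\pm(x_0)$ by $\|\mu^\pm\|_\infty\,\dd x_0$. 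That is where the $L^\infty$ assumption on the marginals actually enters; in your write-up it plays no essential role.
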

\begin{remark}
Notice that \Cref{value_general_upper_bound} would only show that $v_\eps \leq v_0 +d\eps\log(1/\eps) + O(\eps)$. The term $d\eps\log(1/\eps)$ term is generally sharp as shown in \Cref{ex:sharpness}, when $c$ is only Lipschitz, but \Cref{value_general_upper_bound2} shows that $d$ may actually be replaced by $d/2$ when $c$ is $\contspace^{1,1}$. This is in turn sharp in the class of $\contspace^{1,1}$ costs, as already exhibited in the quadratic case in \cite{adamsLargeDeviationsPrincipleWasserstein2011} or in the setting of \cite{palDifferenceEntropicCost2019}, but it is also a consequence of our lower bound stated in \Cref{prop:lower-bound} in the case of infinitesimally twisted cost functions.
\end{remark}

To prove \Cref{value_general_upper_bound2}, we will need a quadratic bound on the average error between a $\lambda$-convex function $f$ and its first-order Taylor expansion. We recall that a function $f : \Omega \to \R$ defined on a convex set $\Omega\subseteq \R^d$ is $\lambda$-convex, for $\lambda \in \R$, if $\displaystyle x\mapsto f(x) - \lambda \frac{\abs{x}^2}2$ is convex, and $f$ is $\lambda$-concave if $-f$ is $(-\lambda)$-convex. If $\Omega$ is open and $f$ is $\lambda$-convex, then it is differentiable almost everywhere on $\Omega$ by Rademacher's Theorem, and its distributional Hessian $D^2 f$ is a $S_{d}(\R)$-valued Radon measure over $\Omega$, which may be written (see \cite[\S{1.3}]{ambrosioFunctionsBoundedVariation2000}) as $D^2 f = \sigma_f \abs{D^2 f}$ where $\abs{D^2 f}$ is a positive Radon measure, and $\sigma_f : \Omega \to S_d(\R)$ is a Borel unit matrix field for the Frobenius norm.

\begin{lemma}\label{lem:alexandrov-modified}
Let $f : \Omega \to \R$ be a $\lambda$-convex function on a convex open set for some $\lambda \leq 0$. There exists a constant $C \geq 0$ depending only on $d$ such that:
\begin{equation}\label{integral_lambda_convexity_Taylor_inequality}
\int_{\Omega} \sup_{y\in B_r(x)\cap \Omega} \abs{f(y) - (f(x)+ \nabla f(x) \cdot (y-x))} \dd x \leq C r^2 \hdm^{d-1}(\partial \Omega) ([f]_{\contspace^{0,1}(\Omega)} + \abs{\lambda} \diam(\Omega)).
\end{equation}
\end{lemma}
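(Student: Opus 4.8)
The plan is to reduce the $\lambda$-convex case to the genuinely convex case and then bound the integrand pointwise using elementary convex analysis, finally integrating. First I would set $g(x) \coloneqq f(x) - \lambda \frac{\abs{x}^2}{2}$, which is convex on $\Omega$ (since $\lambda \le 0$). For $y \in B_r(x)\cap\Omega$ one has $f(y)-(f(x)+\nabla f(x)\cdot(y-x)) = [g(y)-(g(x)+\nabla g(x)\cdot(y-x))] + \frac{\lambda}{2}\abs{y-x}^2$, and since $\abs{y-x}<r$ the extra term is controlled by $\frac{\abs{\lambda}}{2} r^2$, which after integration contributes $\frac{\abs\lambda}{2} r^2 \lbm^d(\Omega)$; because $\lbm^d(\Omega) \le \diam(\Omega)\cdot\hdm^{d-1}(\partial\Omega)$ (up to a dimensional constant, e.g.\ by the isoperimetric-type bound or simply slicing), this is absorbed into the right-hand side. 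So it remains to prove the estimate for convex $g$ with $\lambda = 0$, i.e.\ to bound $\int_\Omega \sup_{y\in B_r(x)\cap\Omega}\big(g(y) - g(x) - \nabla g(x)\cdot(y-x)\big)\dd x$; note the $\sup$ is automatically of a nonnegative quantity by convexity, so the absolute value is harmless once we are in the convex setting.

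The key pointwise step is: for a convex $g$ that is differentiable at $x$, and $y \in \Omega$ with $\abs{y-x}\le r$, pick any subgradient $p \in \partial g(y)$; then monotonicity of the subdifferential gives $(p - \nabla g(x))\cdot(y-x) \ge 0$, hence $g(y) - g(x) - \nabla g(x)\cdot(y-x) \le (p - \nabla g(x))\cdot(y-x) \le \abs{p-\nabla g(x)}\,r$. Since $g$ is $L$-Lipschitz with $L = [f]_{\contspace^{0,1}(\Omega)} + \abs\lambda\diam(\Omega)$ (roughly; the $\lambda$-shift changes the Lipschitz constant by at most $\abs\lambda\diam(\Omega)$, which is exactly why that term appears), both $p$ and $\nabla g(x)$ lie in $\overline{B_L(0)}$, so $\abs{p-\nabla g(x)}\le 2L$. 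That already gives a linear-in-$r$ bound, which is not enough; to get the quadratic bound I would instead exploit that we may choose $y$ and $p$ to witness the supremum and use the more refined inequality $g(y)-g(x)-\nabla g(x)\cdot(y-x) \le (\nabla g(y')-\nabla g(x))\cdot(y-x)$ for $y'$ on the segment, and then integrate the gradient increment: $\int_\Omega \abs{\nabla g(y)-\nabla g(x)}\dd x$ over $y\in B_r(x)$ is small because $\nabla g \in BV_{\loc}$ with $\abs{D^2 g} = \abs{D\nabla g}$ a Radon measure, and a covering/Fubini argument bounds $\int_\Omega \sup_{B_r(x)}\abs{\nabla g(y)-\nabla g(x)}\dd x$ by $C r\,\abs{D^2 g}(\Omega_r)$; combined with the extra factor $r$ from $\abs{y-x}\le r$ this yields the $r^2$ order. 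The total mass $\abs{D^2 g}(\Omega)$ is then estimated by $\tr D^2 g(\Omega) = \int_{\partial\Omega}\nabla g\cdot\nu \le L\,\hdm^{d-1}(\partial\Omega)$ via the divergence theorem applied to $\nabla g$ (valid since $g$ is convex, so $D^2 g \ge 0$ and the Frobenius norm is comparable to the trace up to $\sqrt d$), which produces exactly the claimed right-hand side $C r^2 \hdm^{d-1}(\partial\Omega) L$.

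The main obstacle I anticipate is making the passage from a \emph{pointwise} Taylor error to the integral bound rigorous while keeping the $r^2$ order: one must commute the supremum over $y\in B_r(x)$ with the integral over $x$, which requires either a measurable-selection argument (choosing $y(x)$ and $p(x)$ measurably, using e.g.\ that $x \mapsto \sup_{y\in B_r(x)\cap\Omega}(\cdots)$ is measurable and attained) or, more robustly, a covering argument: cover $\Omega$ by balls of radius $\sim r$, on each ball bound the supremum by the oscillation of $\nabla g$ on a slightly enlarged ball times $r$, and sum using that $\abs{D^2 g}$ is additive over the (boundedly overlapping) enlarged balls. The $\lambda$-convex bookkeeping — tracking how the Lipschitz constant and the total Hessian mass shift under the quadratic correction — is routine but must be done carefully to land precisely on $[f]_{\contspace^{0,1}(\Omega)} + \abs\lambda\diam(\Omega)$ rather than something weaker. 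I would also double check the elementary geometric inequality $\lbm^d(\Omega) \lesssim_d \diam(\Omega)\hdm^{d-1}(\partial\Omega)$ used to absorb the pure $\lambda$-term, since $\Omega$ is merely open convex and could in principle be unbounded, though in the application it will be bounded.
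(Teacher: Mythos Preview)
Your overall architecture matches the paper's: reduce to the convex function $g = f - \tfrac{\lambda}{2}\abs{\cdot}^2$, control the integrated Taylor error by $Cr^2\abs{D^2 g}(\Omega)$, and bound $\abs{D^2 g}(\Omega)$ by $C\,[g]_{\contspace^{0,1}}\,\hdm^{d-1}(\partial\Omega)$ via the divergence theorem applied to $\Delta g$ (using that $D^2g\ge 0$ makes the Frobenius norm comparable to the trace). The $\lambda$-bookkeeping and the absorption of $\lbm^d(\Omega)$ into $\diam(\Omega)\,\hdm^{d-1}(\partial\Omega)$ are also handled the same way in the paper.

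The gap is precisely at the step you yourself flag as the main obstacle. Your proposed route---bound $g(y)-T_xg(y)\le r\,\abs{\nabla g(y)-\nabla g(x)}$ and then assert
\[
\int_\Omega \sup_{y\in B_r(x)}\abs{\nabla g(y)-\nabla g(x)}\,\dd x \le Cr\,\abs{D^2 g}(\Omega)
\]
``because $\nabla g\in BV_{\loc}$ and a covering/Fubini argument''---does not go through as stated. For a general $BV$ vector field $u$, the integral of the \emph{pointwise oscillation} over $r$-balls is not controlled by $r\abs{Du}(\Omega)$; $BV$ only gives the averaged bound $\int_\Omega\int_{B_r}\abs{u(x+h)-u(x)}\,\dd h\,\dd x \le Cr^{d+1}\abs{Du}(\Omega)$, and upgrading the average over $h$ to a supremum over $h$ is exactly the difficulty. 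A covering argument merely reduces the question to the local estimate $\sup_{y,y'\in B_r(x)}\abs{\nabla g(y)-\nabla g(y')}\le Cr^{1-d}\abs{D^2 g}(B_{2r}(x))$, which \emph{is} true for gradients of convex functions but is not a Fubini triviality---it requires a convexity-specific reverse inequality. Neither measurable selection nor covering combinatorics is the real issue.

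The paper supplies precisely this missing ingredient, working directly with $g-T_xg$ rather than with $\nabla g$. It first shows, via the Taylor integral formula on a mollification, that $\int_{B_r(x)}\abs{g-T_xg}\le \tfrac{r^2}{2}\abs{D^2 g}(B_r(x))$, and then invokes the sup-to-average inequality for convex functions from Evans--Gariepy (Theorem~6.7(ii) there),
\[
\sup_{B_{r/2}(x)}\abs{g-T_xg}\le C\fint_{B_r(x)}\abs{g-T_xg},
\]
to obtain $\sup_{B_{r/2}(x)}\abs{g-T_xg}\le Cr^2\,\abs{D^2 g}(B_r(x))/\lbm^d(B_r)$. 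Integrating over $x$ and applying Fubini gives the $Cr^2\abs{D^2 g}(\Omega)$ bound on the interior $\Omega_r$; a crude Lipschitz estimate handles the boundary layer $\Omega\setminus\Omega_r$, whose volume is at most $r\,\hdm^{d-1}(\partial\Omega)$. Your gradient-oscillation route can in fact be completed, but only by going through essentially this same sup-to-average inequality, so it is not a genuinely different argument.
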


The proof is based on that of Alexandrov's Theorem, asserting the existence of a second-order Taylor expansion at almost every point, which is given in \cite[\S{6.4}]{evansMeasureTheoryFine2015}.

\begin{proof}
\textbf{Step~1.} Let $g$ be a function which is convex on $B_r(x)$ and assume that $x$ is a differentiability point of $g$ and a Lebesgue point of both $g$ and $\nabla g$. Let us show that
\begin{equation}\label{lambda_convexity_Taylor_inequality}
\sup_{y\in B_{r/2}(x)} \abs{g(y) - (g(x)+\nabla g(x) \cdot (y-x))} \leq C r^2 \frac{\abs{D^2 g}(B_r(x))}{\lbm^d(B_r(x)))}
\end{equation}
for a constant $C= C(d)$. For $\eps \in (0,r)$, we set $r_\eps = r-\eps$ and we define $g_\eps \coloneqq g \star \eta_\eps$ on $B_{r_\eps}(x)$, where 
$\eta_\eps$ is a standard mollifier supported on $B_\eps$. Since $g_\eps \in \contspace^2(B_{r_\eps}(x))$, by the classical Taylor integral formula, for every $h\in B_{r_\eps}$,
\[g_\eps(x+h) = g_\eps(x) + \nabla g_\eps(x) \cdot h + \int_0^1 (1-t) D^2 g_\eps(x+th) \cdot h \otimes h \dd t,\]
and integrating w.r.t.\ $h$ over $B_{r_\eps}$, we get the bound:
\begin{align*}
\int_{B_{r_\eps}} \abs{g_\eps(x+h)-(g_\eps(x)+\nabla g_\eps(x) \cdot h)} \dd h &\leq r_\eps^2 \int_0^1 (1-t)\abs{D^2 g_\eps}(B_{r_\eps t}(x)) \dd t\\
&\leq r_\eps^2 \int_0^1 (1-t)\abs{D^2 g}(B_{r_\eps t+\eps}(x)) \dd t\\
&\leq \frac{r^2}2 \abs{D^2 g}(B_{r}(x)).
\end{align*}
The second inequality is a simple consequence of the identity $\abs{\mu}(O) = \sup \{ \int \phi \cdot \dd \mu : \phi\in \contspace_c(O,S_d(\R)), \norm{\phi}\leq 1\}$ when $\mu$ is a $S_d(\R)$-valued measure and $O$ is open. Besides, since $x$ is assumed to be a differentiability point of $g$, and a Lebesgue point of $g$ and $\nabla g$, we may pass to the inferior limit on the left-hand side and use Fatou's lemma to get:
\begin{equation}\label{convexity_Taylor_inequality}
\int_{B_r} \abs{g(x+h)-(g(x)+\nabla g(x) \cdot h)} \dd h \leq \frac{r^2}2 \abs{D^2 g}(B_r(x)).
\end{equation}
We set $T_x u(y) = u(x) + \nabla u(x) \cdot (y-x)$ whenever a function $u$ is differentiable at $x$. By convexity of $g$ on $B_r(x)$, \cite[Theorem~6.7~(ii)]{evansMeasureTheoryFine2015} yields\footnote{Notice that the proof works as soon as $g$ is defined on $B_r(x)$, not necessarily on the whole space $\R^d$.}
\begin{equation}\label{convexity_sup_inequality}
\sup_{B_{r/2}(x)} \abs{g-T_x g} \leq C \fint_{B_r(x)} \abs{g-T_x g},
\end{equation}
for a constant $C = C(d)$, which together with \labelcref{convexity_Taylor_inequality} gives \labelcref{lambda_convexity_Taylor_inequality}. 

\textbf{Step~2.} Given a convex function $g$ over $\Omega$, let us show that:
\begin{equation}\label{convexity_sup_integral_inequality}
\int_{\Omega} \sup_{B_{r/2}(x)} \abs{g-T_x g}\dd x \leq C \hdm^{d-1}(\partial \Omega) r^2 [g]_{\contspace^{0,1}(\Omega)},
\end{equation}
for some constant $C = C(d)$. Without loss of generality, we may assume that $\Omega$ is bounded and $[g]_{\contspace^{0,1}(\Omega)} < \infty$, otherwise the right-hand side is trivial. Since $g$ is convex, Lebesgue-almost every point $x \in \Omega$ is a differentiability point of $g$ by Rademacher's Theorem, and a Lebesgue point of $g$ and $\nabla g$ by Lebesgue's Theorem, thus integrating \labelcref{lambda_convexity_Taylor_inequality} over $\Omega_r \coloneqq \{ x : \dist(x, \R^d\setminus \Omega) > r \}$ and using Fubini-Tonelli's Theorem,
\begin{equation}\label{bound_average_convex1}
\begin{aligned}
\int_{\Omega_r} \sup_{B_{r/2}(x)} \abs{g-T_x g} \dd x &\leq Cr^2 \int_{\Omega} \int_{B_r(y)\cap \Omega_r} \frac{1}{\omega_{d} r^{d}} \dd x \dd\abs{D^2 g}(y)\\
&\leq C r^2 \abs{D^2 g}(\Omega).
\end{aligned}
\end{equation}
Again, we consider the regularization $g_\eps = g\star \eta_\eps$ over $\Omega_{\eps}$ for small $\eps >0$. By convexity of $g_\eps$, we know that $\abs{D^2 g_\eps} \leq C \Delta g_\eps$ on $\Omega_{\eps}$ for some constant $C = C(d)$. By Stokes' Theorem,
\[\int_{\Omega_{\eps}} \Delta g_\eps = -\int_{\partial \Omega_{\eps}} \nabla g_\eps \cdot \nu \leq \hdm^{d-1}(\partial \Omega_{\eps})[g_\eps]_{\contspace^{0,1}(\Omega_{\eps})} \leq \hdm^{d-1}(\partial \Omega) [g]_{\contspace^{0,1}(\Omega)},\]
where we have used the inequality $\hdm^{d-1}(\partial \Omega_{\eps}) \leq \hdm^{d-1}(\partial \Omega)$, which holds because $\partial \Omega_{\eps} = p(\partial \Omega)$ where $p$ is the projection onto the convex set $\overline{\Omega}_{\eps}$. Taking the limit $\eps \to 0$ yields, by weak convergence $D^2 g_\eps \mres \Omega_\eps \weakto D^2 g$ in $\contspace_c(\Omega,S_d(\R))'$,
\[\abs{D^2 g}(\Omega) \leq \liminf_{\eps\to 0} \int_{\Omega_{\eps}} \abs{D^2 g_\eps} \leq \liminf_{\eps\to0} C \int_{\Omega_{\eps}} \Delta g_\eps \leq C \hdm^{d-1}(\partial \Omega) [g]_{\contspace^{0,1}(\Omega)}.\]
Then by monotone convergence, if we take the limit $\eps \to 0$ and report this in \labelcref{bound_average_convex1}, we obtain:
\[\int_{\Omega_r} \sup_{B_{r/2}(x)} \abs{g-T_x g}\dd x \leq C \hdm^{d-1}(\partial \Omega) r^2 [g]_{\contspace^{0,1}(\Omega)},\]
for some constant $C = C(d)$.

Now we need to take care of the integral over $\Omega \setminus \Omega_r$. But if $g$ is Lipschitz on $\Omega$ with constant $L = [g]_{\contspace^{0,1}(\Omega)} < \infty$, then for almost every $x \in \Omega\setminus \Omega_r$ and every $y \in B_{r/2}(x)$, we have
\[\abs{g(y) - (g(x) + \nabla g(x) \cdot(y-x))} \leq 2L \abs{y-x} \leq L r.\]
Finally, notice that since $\Omega$ is an open convex set, by the co-area formula (see \cite[\S~3.4]{evansMeasureTheoryFine2015}), for every $r > 0$,
\[\lbm^d(\Omega\setminus \Omega_r) = \int_0^r \hdm^{d-1}(\{\dist(\cdot,\R^d\setminus \Omega) = t\}) \dd t = \int_0^r \hdm^{d-1}(\partial \Omega_t) \dd t \leq r\hdm^{d-1}(\partial \Omega),\]
and as a consequence,
\begin{align*}
\int_{\Omega} \sup_{B_{r/2}(x)} \abs{g-T_x g} &= \int_{\Omega\setminus \Omega_r} \abs{g-T_x g} + \int_{\Omega_r} \abs{g-T_x g} \\
&\leq (Lr)(r \hdm^{d-1}(\partial \Omega)) + C \hdm^{d-1}(\partial \Omega) r^2 [g]_{\contspace^{0,1}(\Omega)}\\
&= C \hdm^{d-1}(\partial \Omega)[g]_{\contspace^{0,1}(\Omega)} r^2,
\end{align*}
for some constant $C = C(d)$ and \labelcref{convexity_sup_integral_inequality} is proved.

\textbf{Step~3.} Let us conclude by establishing \labelcref{integral_lambda_convexity_Taylor_inequality}. Since $f$ is $\lambda$-convex for some $\lambda \leq 0$, we apply \labelcref{convexity_sup_integral_inequality} to the convex function $g : x \mapsto f(x) - \lambda\abs{x}^2/2$ to get
\begin{align*}
\int_{\Omega} \sup_{B_{r/2}(x)\cap \Omega} \abs{f-T_x f} &\leq \begin{multlined}[t]
\int_{\Omega} \sup_{B_{r/2}(x)\cap \Omega} \abs{g-T_x g} \dd x \\
+\frac{\abs{\lambda}}2 \int_{\Omega} \sup_{y\in B_{r/2}(x)\cap \Omega} \abs{\abs{y}^2- (\abs{x}^2+ 2x \cdot(y-x))} \dd x\end{multlined}\\
&\leq C r^2\hdm^{d-1}(\partial \Omega)[g]_{\contspace^{0,1}(\Omega)}  + \frac{\abs{\lambda}}8 r^2\lbm^d(\Omega)\\
&\leq C r^2\hdm^{d-1}(\partial \Omega)([f]_{\contspace^{0,1}(\Omega)} + \abs{\lambda}\diam(\Omega)) + \frac{\abs{\lambda}}8 r^2\lbm^d(\Omega).
\end{align*}
Knowing that $\lbm^d(\Omega) \leq \diam(\Omega)\hdm^{d-1}(\partial \Omega) $ since $\Omega$ is convex, we obtain \labelcref{lambda_convexity_Taylor_inequality} by replacing $r$ by $2r$, for some (different) constant $C = C(d)$.
\end{proof}

We are now ready to establish \Cref{value_general_upper_bound2}.

\begin{proof}[Proof of \Cref{value_general_upper_bound2}]
The measures $\mu^\pm$ are supported in some open and bounded convex sets $\Omega_0^\pm$ such that their closure $X^\pm \coloneqq \mathrm{cl}(\Omega_0^\pm)$ are included in $\Omega^\pm$. Take $\gamma_0 \in \Opt(\mu^-,\mu^+)$ and a pair of $c$-conjugate Kantorovich potentials $(\phi,\psi) \in \contspace(X^-) \times \contspace(X^+)$. Such potentials exist because $c$ is continuous on the compact set $X^-\times X^+$ (see for example \cite[Proposition~1.11]{santambrogioOptimalTransportApplied2015}). We set $\lambda \coloneqq \sup_{\mbf{x}\neq\mbf{x'} \in X^-\times X^+} \frac{\abs{\nabla c(\mbf{x'}) - \nabla c(\mbf{x})}}{\abs{\mbf{x'}-\mbf{x}}}< \infty$, so that $c$ is $\lambda$-concave on $X^-\times X^+$, which implies that $\phi,\psi$ are $\lambda$-concave on $X^-,X^+$ respectively. As such, they are differentiable Lebesgue-a.e. on $\Omega_0^-,\Omega_0^+$, thus $\mu^-$-a.e. and $\mu^+$-a.e. respectively, which in turn implies that
\[E \coloneqq c - \phi \oplus \psi\]
is differentiable $\gamma_0$-a.e. on $\Omega_0^-\times \Omega_0^+$ because $\gamma_0 \in \Pi(\mu^-,\;\mu^+)$ and $c\in \contspace^1(\Omega_0^-\times\Omega_0^+)$. Moreover, by optimality of $\gamma_0$ and $(\phi,\psi)$ we have $E \geq 0$ everywhere on $X^-\times X^+$ with equality on $\spt{\gamma_0}$. As a consequence, for $\gamma_0$-a.e. $\mbf{x_0} \in \Omega_0^-\times \Omega_0^+$, $E(\mbf{x_0}) = 0$, $E$ is differentiable at $\mbf{x_0}$ and $\nabla E(\mbf{x_0}) = 0$.

Take $\delta > 0$ and consider the block approximation $\gamma^\delta$ of $\gamma_0$ built in the proof of \Cref{value_general_upper_bound}. Since by construction $W_\infty(\gamma_0,\gamma^\delta) \leq \delta$, there exists $\kappa \in \Pi(\gamma_0,\gamma^\delta)$ such that $\norm{\mbf{x_0}-\mbf{x}} \leq \delta$ for $\kappa$-a.e. $(\mbf{x_0},\mbf{x})$, and we know in particular that $E$ is differentiable at $\mbf{x_0}$, and $T_{\mbf{x_0}} E(\mbf{x}) = 0$ for $\kappa$-a.e. $(\mbf{x_0},\mbf{x})$. Thus we have:
\begin{align*}
\int_{\Omega_0^-\times \Omega_0^+} (c-\phi \oplus \psi) \dd\gamma^\delta &= \int_{(\Omega_0^-\times \Omega_0^+)^2} (E(\mbf{x})-T_{\mbf{x_0}} E(\mbf x)) \dd\kappa(\mbf{x_0},\mbf{x}) \\
&= \begin{multlined}[t]
\int_{(\Omega_0^-\times \Omega_0^+)^2} (c(\mbf x)-T_{\mbf{x_0}} c (\mbf x))\dd\kappa(\mbf{x_0},\mbf{x})\\
- \int_{\Omega_0^-\times \Omega_0^-} (\phi(x)-T_{x_0}\phi(x)) \dd\kappa^-(x_0,x)\\
- \int_{\Omega_0^+\times\Omega_0^+} (\psi(y)-T_{y_0}\psi(y)) \dd\kappa^+(y_0,y),
\end{multlined}
\end{align*}
where $\kappa^- = (\pi_1,\pi_3)_\# \kappa, \kappa^+ = (\pi_2,\pi_4)_\# \kappa$ and $\pi_i$ denotes the projection on the $i$-th component. Since $c$ is $\lambda$-concave on $\Omega_0^-\times\Omega_0^+$,
\[c(\mbf x)-T_{\mbf{x_0}} c (\mbf x) \leq \frac{\lambda}2 \abs{\mbf{x}-\mbf{x_0}}^2,\]
and since $\phi,\psi$ are $\lambda$-concave on $\Omega_0^-, \Omega_0^+$ respectively, using \Cref{lem:alexandrov-modified} and recalling that $(\pi_1)_\# \kappa^\pm = \mu^\pm$,
\begin{align*}
\int_{\Omega_0^+\times\Omega_0^+} (c-\phi \oplus \psi) \dd\gamma^\delta &\leq \begin{multlined}[t]
\frac{\lambda}2 \abs{\mbf{x}-\mbf{x_0}}^2 + \int_{\Omega_0^-} \sup_{x\in B_\delta(x_0)\cap\Omega_0^-} \abs{\phi(x)-T_{x_0}\phi(x)} \dd \mu^-(x_0)\\
+ \int_{\Omega_0^+} \sup_{y\in B_\delta(y_0)\cap \Omega_0^+} \abs{\psi(y)-T_{y_0}\psi(y)} \dd \mu^+(y_0)
\end{multlined}\\
&\leq \begin{aligned}[t]
\lambda \delta^2 + &C \delta^2 \norm{\mu^-}_{L^\infty(\Omega_0^-)}\hdm^{d-1}(\partial \Omega_0^-)([\phi]_{\contspace^{0,1}(\Omega_0^-)}+\lambda\diam(\Omega_0^-))\\
 + &C \delta^2 \norm{\mu^+}_{L^\infty(\Omega_0^+)}\hdm^{d-1}(\partial \Omega_0^+)([\psi]_{\contspace^{0,1}(\Omega_0^+)}+\lambda\diam(\Omega_0^+))
\end{aligned}\\
&\leq C' \delta^2,
\end{align*}
for some constant $C'>0$ which does not depend on $\delta$.

Now, we apply \labelcref{upper_bound_box_counting_number} to assert that $H_\delta(\mu^+) \leq \log(N_\delta(X^+)) \leq d\log(1/\delta) + \log C$ for some constant $C > 0$ depending only on $X^+$, and we use $\gamma^\delta$ as a competitor in \labelcref{eot_pb} to obtain:
\begin{align*}
v_\eps \leq \int_{\Omega^-\times \Omega^+} c \dd\gamma^\delta + \eps H_\delta(\mu^+) &= v_0 + \int_{\Omega^-\times \Omega^+} (c-\phi\oplus \psi) \dd\gamma^\delta + \eps H_{\delta}(\mu^+)\\
&\leq  v_0 + C' \delta^2 + \eps H_\delta(\mu^+)\\
&= v_0 + C'\delta^2 + d\eps\log(1/\delta)  + \eps \log C.
\end{align*}
Finally, taking $\delta = \sqrt\eps$ yields:
\[v_\eps \leq v_0 + \frac d2 \eps\log(1/\eps) + \eps(C'+\log C).\qedhere\]
\end{proof}


\section{Lower bound for infinitesimally twisted costs}\label{sec:lower}

In this section, we establish a lower bound that is analogous to the fine upper bound given in \Cref{sec:fine_upper_bound}, for a class of $\contspace^2$ costs satisfying an \emph{infinitesimal twist condition}, which includes situations where optimal plans are not necessarily given by a map. Our proof is based on the so-called Minty's trick asserting that the graph of a monotone operator is a $1$-Lipschitz graph in a rotated chart (see \cite{mintyMonotoneNonlinearOperators1962}). It has been used in particular by Alberti and Ambrosio \cite{albertiGeometricalApproachMonotone1999} to study the fine properties of monotone functions on $\R^d$, and more recently in optimal transport by McCann, Pass and Warren \cite{mccannRectifiabilityOptimalTransportation2012} to show the rectifiability of optimal transport plans. We will exploit this trick a little bit further to show a quadratic detachment of the duality gap, leading to the lower bound that we seek. As a side remark, we deduce from the same trick a quantitative stability result in the quadratic case $c(x,y) = \abs{x-y}^2$.

\subsection{Proof of the lower bound}

We start with the definition of the infinitesimal twist condition, corresponding to condition (A2) in \cite{maRegularityPotentialFunctions2005}, and the non-degeneracy condition in \cite{mccannRectifiabilityOptimalTransportation2012}.

\begin{definition}\label{def:inf-twist}
Given $c \in \contspace^2(\Omega^- \times \Omega^+)$ where $\Omega^\pm\subseteq \R^d$ are open sets, we say that $c$ is \emph{infinitesimally twisted} if $\nabla^2_{xy} c(x,y) \coloneqq (\partial^2_{x_i y_j} c(x,y))_{i,j} \in M_d(\R)$ is invertible for every $(x,y) \in \Omega^- \times \Omega^+$.
\end{definition}

McCann, Pass and Warren have proved that for such a cost, the support of any optimal transport plan is locally Lipschitz (see \cite[Theorem~1.2]{mccannRectifiabilityOptimalTransportation2012}).


We closely follow the computations of \cite{mccannRectifiabilityOptimalTransportation2012} leading to the proof of their main theorem, but we consider points which do not necessarily belong to the support of an optimal plan.

\begin{lemma}\label{gap_lemma}
Let $c \in \contspace^2(\Omega^- \times \Omega^+)$ be an infinitesimally twisted cost, and $(\phi,\psi) \in \contspace(X^-)\times \contspace(X^+)$ be a pair of $c$-conjugate functions on compact convex sets $X^\pm\subseteq \Omega^\pm$. We set $\ener \coloneqq c- \phi \oplus \psi$ on $X^-\times X^+$, $\Sigma \coloneqq \{ \ener = 0\}$, and for every $r > 0$,
\[\kappa(r)  \coloneqq \sup_{\substack{(\mbf{x},\mbf{x'}) \in X^-\times X^+\\\norm{\mbf{x'}-\mbf{x}}\leq r}} \norm{\nabla^2_{xy} c(\mbf{x'})^{-1} \nabla^2_{xy} c(\mbf{x})-\id} \in [0,\infty).\]
If $\mbf{\bar x} \in X^-\times X^+$ and $\mbf{x},\mbf{x'} \in B_r(\mbf{\bar x}) \cap (X^-\times X^+)$, then
\begin{equation}\label{gap_inequality}
\ener(\mbf{x'}) + \ener(\mbf{x}) \geq \abs{\Delta u}^2 - \abs{\Delta v}^2 - \kappa(r) (\abs{\Delta u}^2 + \abs{\Delta v}^2),
\end{equation}
where we have set $\Delta u \coloneqq u(\mbf{x'})-u(\mbf{x})$, $\Delta v \coloneqq v(\mbf{x'})-v(\mbf{x})$, and
\[u(\mbf{x}) \coloneqq \frac 12 (x+\nabla^2_{xy} c(\mbf{\bar x}) y), \quad v(\mbf{x}) \coloneqq \frac 12 (x-\nabla^2_{xy} c(\mbf{\bar x}) y), \quad \text{for every } \mbf{x} = (x,y).\]
\end{lemma}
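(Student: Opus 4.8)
The plan is to follow the computation of McCann--Pass--Warren \cite{mccannRectifiabilityOptimalTransportation2012}, but applied at an arbitrary base point $\mbf{\bar x}$ rather than only at points of $\spt\gamma_0$. First I would use the $c$-conjugacy of $(\phi,\psi)$ to produce two first-order (sub)differential inequalities. Fix $\mbf{\bar x}=(\bar x,\bar y)$. Since $\psi$ is $c$-conjugate, for each fixed $y$ the function $x\mapsto c(x,y)-\phi(x)$ attains a value $\geq\psi(y)$ with equality characterizing optimality; more usefully, writing $\ener(x,y)=c(x,y)-\phi(x)-\psi(y)\geq 0$, the key point is that by $c$-conjugacy $\phi(x)=\inf_{y'}\,c(x,y')-\psi(y')$, so for \emph{any} pair of points the ``$c$-monotonicity'' type inequality
\[
\ener(\mbf x')+\ener(\mbf x)\;\geq\;\bigl(c(\mbf x')-c(x',y)-c(x,y')+c(\mbf x)\bigr)
\]
holds after choosing $\mbf x=(x,y)$, $\mbf x'=(x',y')$ and cross-terms $(x',y),(x,y')$ — this is exactly the discrete $c$-monotonicity inequality and it does not require $\mbf x,\mbf x'\in\Sigma$. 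So the first step is to record that for all $\mbf x,\mbf x'\in X^-\times X^+$,
\[
\ener(\mbf x')+\ener(\mbf x)\;\geq\; c(x',y')+c(x,y)-c(x',y)-c(x,y').
\]

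The second step is to Taylor-expand the right-hand side to second order around $\mbf{\bar x}$. The mixed second difference $c(x',y')+c(x,y)-c(x',y)-c(x,y')$ equals, by the integral form of Taylor's theorem applied twice (once in the $x$-slot, once in the $y$-slot),
\[
\int_0^1\!\!\int_0^1 (x'-x)^T\,\nabla^2_{xy}c\bigl(x+s(x'-x),\,y+t(y'-y)\bigr)\,(y'-y)\,\dd s\,\dd t,
\]
which I would compare with the ``frozen'' bilinear form $(x'-x)^T\,\nabla^2_{xy}c(\mbf{\bar x})\,(y'-y)$: the difference is controlled, for $\mbf x,\mbf x'\in B_r(\mbf{\bar x})$, by $\kappa(r)$ up to the geometry of the substitution — this is precisely where the quantity $\kappa(r)$, measuring how far $\nabla^2_{xy}c(\mbf x')^{-1}\nabla^2_{xy}c(\mbf x)$ is from the identity, enters. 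Then I introduce the rotated coordinates $u=\tfrac12(x+\nabla^2_{xy}c(\mbf{\bar x})\,y)$, $v=\tfrac12(x-\nabla^2_{xy}c(\mbf{\bar x})\,y)$, for which the polarization identity gives
\[
(x'-x)^T\,\nabla^2_{xy}c(\mbf{\bar x})\,(y'-y)\;=\;\abs{\Delta u}^2-\abs{\Delta v}^2
\]
(up to a harmless symmetrization: one may replace $\nabla^2_{xy}c(\mbf{\bar x})$ by its symmetric part in the quadratic form, or absorb the antisymmetric part into the $\kappa(r)$ error — I would note this carefully, following \cite{mccannRectifiabilityOptimalTransportation2012}). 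Combining, the leading term is $\abs{\Delta u}^2-\abs{\Delta v}^2$ and the remainder is bounded by $\kappa(r)\bigl(\abs{\Delta u}^2+\abs{\Delta v}^2\bigr)$ after expressing $\abs{x'-x}$ and $\abs{\nabla^2_{xy}c(\mbf{\bar x})(y'-y)}$ in terms of $\abs{\Delta u},\abs{\Delta v}$.

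The main obstacle is bookkeeping the error term: one must show that \emph{all} the discrepancies — between the double integral and the frozen bilinear form, and any antisymmetric part of $\nabla^2_{xy}c(\mbf{\bar x})$ — can be uniformly absorbed into a single multiple of $\kappa(r)\bigl(\abs{\Delta u}^2+\abs{\Delta v}^2\bigr)$, with the \emph{exact} constant $1$ as stated (no extra dimensional factor). This requires writing $\nabla^2_{xy}c(x+s(x'-x),y+t(y'-y)) = \nabla^2_{xy}c(\mbf{\bar x})\bigl(\id + R_{s,t}\bigr)$ with $\norm{R_{s,t}}\leq\kappa(r)$ (which is the precise role of $\kappa$ as defined, since $\mbf x+s(\mbf x'-\mbf x)$ and $\mbf{\bar x}$ are within $r$), then estimating $(x'-x)^T\nabla^2_{xy}c(\mbf{\bar x})R_{s,t}(y'-y)$ by $\kappa(r)\,\abs{x'-x}\,\abs{\nabla^2_{xy}c(\mbf{\bar x})(y'-y)}$ — wait, one must be careful about where the norm of $\nabla^2_{xy}c(\mbf{\bar x})$ lands — and finally using $\abs{x'-x}\leq\abs{\Delta u}+\abs{\Delta v}$, $\abs{\nabla^2_{xy}c(\mbf{\bar x})(y'-y)}\leq\abs{\Delta u}+\abs{\Delta v}$ together with $2ab\leq a^2+b^2$. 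I expect the cleanest route is to mirror verbatim the inequality chain of \cite[proof of Theorem~1.2]{mccannRectifiabilityOptimalTransportation2012}, checking at each line that $c$-conjugacy (not membership of $\mbf x,\mbf x'$ in $\Sigma$) suffices, and that the final constants collapse to exactly those in \labelcref{gap_inequality}.
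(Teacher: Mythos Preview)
Your approach is exactly the paper's: the $c$-conjugacy inequality $\ener(\mbf x)+\ener(\mbf x')\ge c(x',y')+c(x,y)-c(x',y)-c(x,y')$, the double-integral Taylor remainder, and the Minty coordinates $(u,v)$. Your symmetrization concern is unnecessary---since $x'-x=\Delta u+\Delta v$ and $\nabla^2_{xy}c(\mbf{\bar x})(y'-y)=\Delta u-\Delta v$, the frozen bilinear form equals $(\Delta u+\Delta v)\cdot(\Delta u-\Delta v)=\abs{\Delta u}^2-\abs{\Delta v}^2$ exactly (no symmetry of the matrix needed)---and for the error the paper writes the integrand directly as $(\Delta u+\Delta v)\cdot\bigl[\nabla^2_{xy}c(\cdot)\,\nabla^2_{xy}c(\mbf{\bar x})^{-1}\bigr](\Delta u-\Delta v)$, bounds the bracket's deviation from $\id$ by $\kappa(r)$, and finishes with $\abs{a+b}\,\abs{a-b}\le\abs{a}^2+\abs{b}^2$, yielding the constant $1$ without the extra bookkeeping you anticipate.
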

\begin{proof}
Take $\mbf{\bar x} \in X^-\times X^+$, $\mbf{x} = (x,y)$ and $\mbf{x'} = (x',y')$ in $B_r(\mbf{\bar x}) \cap (X^-\times X^+)$. Knowing that $(\phi,\psi)$ is a pair of $c$-conjugate functions, and using Taylor's integral formula,
\begin{align*}
\ener(\mbf{x'}) &= c(x',y') - \phi(x') - \psi(y')\\
&\geq c(x',y') - (c(x',y)-\psi(y)) - (c(x,y')-\phi(x))\\
&= c(x',y') - c(x',y) - c(x,y') + c(x,y) - \ener(\mbf{x})\\
&= -\ener(\mbf{x}) + (x'-x) \cdot \left[ \int_0^1 \int_0^1 \nabla^2_{xy} c(x+(1-s)x',y+(1-t)y') \dd s \dd t\right] (y'-y).
\end{align*}
With the notations introduced in the statement of the lemma, $x = u(\mbf{x})+v(\mbf{x})$ and $y = \nabla^2_{xy} c(\mbf{\bar x})^{-1}(u(\mbf{x})-v(\mbf{x}))$, and similarly for $\mbf{x'}$, so that:
\begin{align*}
\MoveEqLeft \ener(\mbf{x'}) + \ener(\mbf{x})\\
&\geq \begin{multlined}[t]
(\Delta u + \Delta v)\: \cdot\\
\left[\int_0^1 \int_0^1 \nabla^2_{xy} c(x+(1-s)x',y+(1-t)y') \nabla^2_{xy} c(\mbf{\bar x})^{-1}\dd s \dd t \right](\Delta u - \Delta v)
\end{multlined}\\
&\geq \abs{\Delta u}^2-\abs{\Delta v}^2 - \kappa(r)\abs{\Delta u + \Delta v}\abs{\Delta u - \Delta v}\\
&\geq \abs{\Delta u}^2-\abs{\Delta v}^2 - \kappa(r)(\abs{\Delta u}^2 + \abs{\Delta v}^2),
\end{align*}
the last inequality resulting from the fact that $\abs{a+b}\abs{a-b} \leq \abs{a}^2+\abs{b}^2$ for every $a,b\in \R^d$, as can be seen by expanding and comparing the squares of the two sides.
\end{proof}
\begin{remark}
The rectifiability theorem of McCann, Pass and Warren \cite[Theorem~1.2]{mccannRectifiabilityOptimalTransportation2012} is an immediate consequence of \Cref{gap_lemma} (to no surprise, since we have essentially followed their computations). Indeed, for any optimal plan $\gamma \in \Pi(\mu^-,\;\mu^+)$ with $\mu^\pm$ supported on compact convex sets $X^\pm \subseteq \Omega^\pm$, by taking $\mbf{\bar x}, \mbf{x}, \mbf{x'} \in \spt{\gamma} \subseteq \Sigma$ , \labelcref{gap_inequality} yields
\[\abs{\Delta u} \leq \sqrt{\frac{1+\kappa(r)}{1-\kappa(r)}} \abs{\Delta v},\]
and since $\kappa(r) \xto{r\to 0} 0$, $v$ is a Lipschitz function of $u$, so that $B_r(\mbf{\bar x}) \cap \spt{\gamma}$ is included in a Lipschitz $d$-dimensional graph. 

Notice however that \labelcref{gap_inequality} gives extra information: if we take for example $\mbf{x} = \mbf{\bar x}$, and we fix $v' = v = \bar v$ but we let the $u$ component free, we get
\[\ener(\mbf{x'}) \geq (1-\kappa(r)) \abs{u'-u}^2.\]
Thus we have a \emph{quadratic growth} of $E$ away from $\mbf{x}$ in the direction given by the $u$ coordinate. This is what we shall use to get the lower bound.
\end{remark}

\begin{proposition}\label{prop:lower-bound}
Let $\Omega^\pm$ be open convex subsets of $\R^d$, $c \in \contspace^2(\Omega^- \times \Omega^+)$ be an infinitesimally twisted cost, and $\mu^\pm \in \probspace(\Omega^\pm)\cap L^\infty(\Omega^\pm)$ with compact support in $\Omega^\pm$. There exists a constant $m\in [0,\infty)$ such that for every $\eps >0$,
\begin{equation}\label{lower_bound}
v_\eps \geq v_0 + \frac d2 \eps\log(1/\eps) - m \eps.
\end{equation}
\end{proposition}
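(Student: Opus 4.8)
The starting point is the dual formulation of \labelcref{eot_pb}: plugging a pair of $c$-conjugate Kantorovich potentials $(\phi,\psi)$ for \labelcref{ot_pb} as competitors in \labelcref{dual_eot_pb} and using $v_0 = \int \phi\,\dd\mu^- + \int\psi\,\dd\mu^+$, one gets
\[
v_\eps \geq v_0 - \eps\log\int_{X^-\times X^+} e^{-\ener(\mbf x)/\eps}\,\dd(\mu^-\otimes\mu^+)(\mbf x),
\]
with $\ener = c - \phi\oplus\psi \geq 0$. So it suffices to prove an upper bound of the form $\int e^{-\ener/\eps}\,\dd(\mu^-\otimes\mu^+) \leq C\eps^{d/2}$ for small $\eps$, since then $-\eps\log(C\eps^{d/2}) = \frac d2\eps\log(1/\eps) - \eps\log C$. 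First I would reduce to a local statement: cover the compact set $X^-\times X^+$ by finitely many balls $B_{r}(\mbf{\bar x}_k)$ of a fixed small radius $r$ chosen so that $\kappa(r) \leq 1/2$ (possible since $\kappa(r)\to 0$), where $\kappa$ is as in \Cref{gap_lemma}; it is enough to bound $\int_{B_r(\mbf{\bar x}_k)} e^{-\ener/\eps}\,\dd(\mu^-\otimes\mu^+)$ by $C_k\eps^{d/2}$ for each $k$.

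The heart of the matter is the \emph{quadratic detachment} of $\ener$ from $\Sigma = \{\ener = 0\}$ in the $u$-directions, as extracted in the remark after \Cref{gap_lemma}. Fix a ball $B_r(\mbf{\bar x})$ and the associated linear coordinates $\mbf x \mapsto (u(\mbf x),v(\mbf x))$, which form an invertible linear change of variables on $\R^d\times\R^d$ (with Jacobian depending only on $\det\nabla^2_{xy}c(\mbf{\bar x})$, bounded away from $0$ and $\infty$ as $\mbf{\bar x}$ ranges over the compact $X^-\times X^+$). Taking $\mbf x = \mbf{\bar x}$ in \labelcref{gap_inequality} and writing a general $\mbf{x'}\in B_r(\mbf{\bar x})$ in $(u,v)$ coordinates, one obtains $\ener(\mbf{x'}) \geq (1-\kappa(r))\,\abs{u(\mbf{x'}) - u(\mbf{\bar x})}^2 \geq \frac12\abs{u(\mbf{x'})-u(\mbf{\bar x})}^2$. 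Hence, on $B_r(\mbf{\bar x})$,
\[
e^{-\ener(\mbf{x'})/\eps} \leq e^{-\abs{u(\mbf{x'})-u(\mbf{\bar x})}^2/(2\eps)}.
\]
Now push $\mu^-\otimes\mu^+$ forward through the linear map $\mbf x\mapsto(u,v)$; since $\mu^\pm$ have $L^\infty$ densities, the image measure on the $(u,v)$-chart has an $L^\infty$ density (with respect to Lebesgue measure on $\R^d\times\R^d$), with norm controlled by $\norm{\mu^-}_\infty\norm{\mu^+}_\infty$ and $\abs{\det\nabla^2_{xy}c(\mbf{\bar x})}$. Therefore
\[
\int_{B_r(\mbf{\bar x})} e^{-\ener/\eps}\,\dd(\mu^-\otimes\mu^+) \leq \norm{\text{density}}_\infty \int_{\R^d}\!\!\int_{\R^d} e^{-\abs{u-u(\mbf{\bar x})}^2/(2\eps)}\,\dd u\,\dd v,
\]
but the $v$-integral is over a bounded region (the image of $B_r$), so it contributes a constant, and $\int_{\R^d} e^{-\abs{u-u_0}^2/(2\eps)}\,\dd u = (2\pi\eps)^{d/2}$. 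This gives exactly the bound $C_k\eps^{d/2}$ with a constant uniform over the finitely many balls, and summing over $k$ and taking $-\eps\log$ yields \labelcref{lower_bound}.

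The main obstacle, and the point requiring care, is making the Gaussian integral estimate honest: \labelcref{gap_inequality} only controls $\ener$ from below by the $u$-displacement, giving \emph{no} decay in the $v$-directions, so one must argue that the $v$-variable ranges over a bounded set (which it does, since we have localized to $B_r(\mbf{\bar x})$ and $u,v$ are Lipschitz in $\mbf x$) and absorb that into a constant — it is precisely this "one free direction bundle of dimension $d$, one quadratically-confined direction bundle of dimension $d$" structure that produces the exponent $d/2$ rather than $d$ or $0$. A secondary technical point is checking the uniformity of all constants (the density bounds, the Jacobians, the radius $r$ making $\kappa(r)\leq 1/2$, the diameter of the $v$-projection) over the compact product $X^-\times X^+$ and over $\mbf{\bar x}$; continuity of $\nabla^2_{xy}c$ and its invertibility on the compact set handle this. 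No twist-induced uniqueness or graph structure of optimal plans is needed — only the pointwise inequality \labelcref{gap_inequality}, which is why the result covers costs whose optimal plans may be non-unique and non-graphical.
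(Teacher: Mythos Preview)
Your overall strategy (dual bound, reduce to $\int e^{-E/\eps}\,\dd(\mu^-\otimes\mu^+)\leq C\eps^{d/2}$, localize, change to $(u,v)$ coordinates, Gaussian integral in $u$) matches the paper's, but the core inequality you invoke is wrong. You write that taking $\mbf x=\mbf{\bar x}$ in \labelcref{gap_inequality} gives $E(\mbf{x'})\geq (1-\kappa(r))\abs{u(\mbf{x'})-u(\mbf{\bar x})}^2$ for a \emph{general} $\mbf{x'}\in B_r(\mbf{\bar x})$. It does not: the inequality reads
\[
E(\mbf{x'})+E(\mbf{\bar x})\;\geq\;(1-\kappa(r))\abs{\Delta u}^2-(1+\kappa(r))\abs{\Delta v}^2,
\]
and the term $-(1+\kappa(r))\abs{\Delta v}^2$ cannot be dropped. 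The remark you cite after \Cref{gap_lemma} explicitly assumes $v'=v=\bar v$, i.e.\ $\Delta v=0$; for a generic $\mbf{x'}$ in the ball this fails, and the right-hand side can be negative (in fact along $\Sigma$ it \emph{must} be, since $E$ vanishes there while $\abs{\Delta u}$ need not). So your Gaussian bound $e^{-E/\eps}\leq e^{-\abs{u-u(\mbf{\bar x})}^2/(2\eps)}$ is unjustified and the argument collapses.

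The paper closes exactly this gap by using a $v$-dependent reference point: for each fixed $v$ in the chart, choose $u_v$ minimizing $E_{\mbf{\bar x}}(\cdot,v)$ on the slice, then apply \labelcref{gap_inequality} to the pair $(u,v)$ and $(u_v,v)$. Now $\Delta v=0$, the bad term disappears, and since $E_{\mbf{\bar x}}(u_v,v)\leq E_{\mbf{\bar x}}(u,v)$ one gets $E_{\mbf{\bar x}}(u,v)\geq\tfrac14\abs{u-u_v}^2$. The Gaussian integral in $u$ is then centered at $u_v$ (which depends on $v$, but harmlessly), and the $v$-integral runs over a bounded set. A secondary difference: the paper covers only $\Sigma$ by these charts and handles the complement via $E\geq C_4>0$ there, whereas you cover all of $X^-\times X^+$; your choice would be fine once the slice argument is in place, but the missing ingredient is the $v$-slice minimizer.
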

\begin{proof}
The measures $\mu^\pm$ being concentrated on some compact convex subsets $X^\pm \subseteq \Omega^\pm$, consider a pair $(\phi,\psi) \in \contspace(X^-)\times\contspace(X^+)$ of $c$-conjugate Kantorovich potentials. Taking $(\phi,\psi)$ as competitor in \labelcref{dual_eot_pb}, we get the lower bound:
\begin{align*}
v_\eps &\geq \int_{X^-} \phi \dd\mu^- + \int_{X^+} \psi\dd\mu^+ -\eps \log\left(\int_{X^-\times X^+} e^{-\frac{E}\eps} \dd\mu^-\otimes\mu^+\right)\\
&= v_0  -\eps \log\left(\int_{X^-\times X^+} e^{-\frac{E}\eps} \dd\mu^-\otimes\mu^+\right),
\end{align*}
where $E \coloneqq c-\phi\oplus \psi$ on $X^- \times X^+$ as in \Cref{gap_lemma}. We are going to show that for some constant $C>0$ and for every $\eps >0$,
\[\int_{X^-\times X^+} e^{-E/\eps} \dd \mu^-\otimes\mu^+ \leq C \eps^{d/2},\]
which yields \labelcref{lower_bound} with $m = \log(C)$.

We follow the notation introduced in the statement of \Cref{gap_lemma}, and consider in particular the functions $u,v$ defined for a fixed $\mbf{\bar x} \in X^-\times X^+$. We define the affine map $\Phi_{\mbf{\bar x}} : \mbf{x} \mapsto (u(\mbf{x}),v(\mbf{x}))-(u(\mbf{\bar x}),v(\mbf{\bar x}))$ whose Jacobian determinant is easily computed: $J\Phi_{\mbf{\bar x}} = (-1/2)^d \det (\nabla^2_{xy} c(\mbf{\bar x}))$. Thus $\Phi_{\mbf{\bar x}}$ is an affine isomorphism and $\abs{J(\Phi_{\bar x})^{-1}} \leq C_1 \coloneqq 2^d \sup_{\mbf{x} \in X^-\times X^+} \abs{\det \nabla^2_{xy} c(\mbf{x})}^{-1} < \infty$ because $c$ is infinitesimally twisted. We set $E_{\mbf{\bar x}} \coloneqq E \circ \Phi_{\mbf{\bar x}}^{-1}$ over $D_{\mbf{\bar x}} \coloneqq \Phi_{\mbf{\bar x}}(X^-\times X^+)$ and define the open neighborhood of $\mbf{\bar x}$, $P_{\mbf{\bar x}} \coloneqq \Phi_{\mbf{\bar x}}^{-1} (B_r \times B_r) \subseteq B_{Lr}(\mbf{\bar x})$, where $L \coloneqq 2(1\wedge \sup_{\mbf{x} \in X^-\times X^+} \norm{(\nabla_{xy}^2 c(\mbf{x}))^{-1}})$ and $r > 0$ is chosen such that $\kappa(L r) \leq 1/2$. By the change of variable formula we get:
\begin{align*}
\MoveEqLeft \int_{P_{\mbf{\bar x}}\cap X^-\times X^+} e^{-E/\eps} \dd \mu^- \otimes \mu^+\\
&= \int_{(B_r \times B_r) \cap D_{\mbf{\bar x}}} e^{-E(\Phi_{\mbf{\bar x}}^{-1}(u,v))/\eps} (\mu^- \otimes \mu^+)(\Phi_{\mbf{\bar x}}^{-1}(u,v)) \abs{J(\Phi_{\mbf{\bar x}})^{-1}(u,v)} \dd u \dd v\\
&\leq C_1 \norm{\mu^-}_{L^\infty(\Omega^-)} \norm{\mu^+}_{L^\infty(\Omega^+)} \int_{\pi_2((B_r\times B_r) \cap D_{\mbf{\bar x}})} \int_{\{u \in B_r : (u,v) \in D_{\mbf{\bar x}}\}} e^{-E_{\mbf{\bar x}}(u,v)/\eps} \dd u \dd v.
\end{align*}
Now, for every $v \in \pi_2((B_r\times B_r) \cap D_{\mbf{\bar x}})$, consider $u_v$ a minimizer of $E_{\mbf{\bar x}}(\cdot,v)$ over $\{u \in \bar B_r : (u,v) \in {D_{\mbf{\bar x}}}\}$. By \labelcref{gap_inequality} of \Cref{gap_lemma}, for every $u \in B_r$ such that $(u,v) \in D_{\mbf{\bar x}}$,
\[E_{\mbf{\bar x}}(u,v) \geq \frac{1}{2} (E_{\mbf{\bar x}}(u_v,v)+ E_{\mbf{\bar x}}(u,v))) \geq \frac{1}{2}(1-\kappa(Lr)) \abs{u-u_v}^2  \geq \frac{1}{4} \abs{u-u_v}^2 .\]
As a consequence we obtain:
\begin{align*}
\int_{\pi_2((B_r\times B_r) \cap D_{\mbf{\bar x}})} \int_{\{u \in B_r : (u,v) \in D_{\mbf{\bar x}}\}} e^{-E_{\mbf{\bar x}}(u,v)/\eps} \dd u \dd v &\leq  \int_{\pi_2(B_r\times B_r) \cap D_{\mbf{\bar x}}} \int_{B_r} e^{-\abs{u-u_v}^2/{2\eps}}\dd u \dd v\\
&\leq \omega_d r^d \eps^{d/2} \int_{\R^d} e^{-\abs{u}^2/2} \dd u\\
&\leq C_2 r^d \eps^{d/2},
\end{align*}
for some constant $C_2 > 0$.
The sets $\{P_{\mbf{\bar x}}\}_{\mbf{\bar x}\in \Sigma}$ form an open covering of the compact set $\Sigma \subseteq X^-\times X^+$, hence we may extract a finite covering $P_{\mbf{\bar x_1}}, \ldots, P_{\mbf{\bar x_N}}$, so that $\Sigma \subseteq \bigcup_{i=1}^N P_{\mbf{\bar x_i}}$ and:
\[\int_{\bigcup_{i=1}^N P_{\mbf{\bar x_i}}\cap X^-\times X^+} e^{-E/\eps} \mu^- \otimes \mu^+ \leq N C_1 C_2\norm{\mu^-}_{L^\infty(\Omega^-)} \norm{\mu^+}_{L^\infty(\Omega^+)} r^d \eps^{d/2} \leq C_3\eps^{d/2},\]
for some constant $C_3 > 0$. Finally, since $E$ is continuous and does not vanish on the compact set $K = (X^-\times X^+) \setminus \bigcup_{i=1}^N P_{\mbf{\bar x_i}}$, it is bounded from below on $K$ by some constant $C_4 > 0$. Therefore, for every $\eps > 0$,
\[\int_{X^-\times X^+} e^{-E/\eps} \dd \mu^- \otimes\mu^+ \leq C_3\eps^{d/2} + e^{-C_4/\eps} \leq  C \eps^{d/2},\]
for some constant $C > 0$. This concludes the proof.
\end{proof}

\subsection{Quantitative stability of optimal plans for the quadratic cost}

This final paragraph is devoted to the quadratic-cost case for which combining Minty's trick and the upper bound on the entropic cost, one can obtain a quantitative estimate between the optimal entropic plan and optimal transport plans. Given two probability measures $\mu^{\pm} \in \probspace(\R^d)$ with compact support, taking as cost function $c(x,y) \coloneqq \frac 12 \abs{x-y}^2$, the transport cost is merely the square $2$-Wasserstein distance:
\[v_0 =\frac{1}{2} W_2^2(\mu^-, \mu^+) \coloneqq \frac{1}{2} \inf_{\gamma \in \Pi(\mu^-,\;\mu^+)} \int_{\R^d\times \R^d} \abs{x-y}^2 \dd \gamma(x,y).\]
It is well known (see Brenier \cite{brenierPolarFactorizationMonotone1991}, McCann \cite{mccannExistenceUniquenessMonotone1995}) that there exists a convex lsc function $f$ on $\R^d$ such that $\gamma \in \Pi(\mu^-,\;\mu^+)$ is optimal in the above quadratic OT problem if and only if 
\[\spt{\gamma} \subseteq \Gamma_f\coloneqq \{(x,y)\in \R^d \times \R^d \; : \;  f(x)+f^*(y)=x\cdot y\}=\{(x,y)\in \R^d \times \R^d \; : \; y\in \partial f(x)\}\]
where $f^*$ is the Legendre transform of $f$. Kantorovich potentials are then given by 
\[\phi(x) \coloneqq \frac{1}{2} \abs{x}^2-f(x), \quad  \psi(y)\coloneqq\frac{1}{2} \abs{y}^2-f^*(y), \qquad (\forall x,y\in \R^d).\]
If $\mu^-$ is absolutely continuous with respect to the Lebesgue measure, $f$ is differentiable $\mu^-$-a.e. and there is a unique optimal plan $\gamma\coloneqq(\id, T)_\# \mu^-$ where $T\coloneqq\nabla f$ is Brenier's optimal transport map from $\mu^-$ to $\mu^+$. For $\eps>0$, the entropic cost reads
\[v_\eps = \inf_{\gamma \in \Pi(\mu^-,\;\mu^+)} \Bigg\{ \frac{1}{2} \int_{\R^d\times \R^d} \abs{x-y}^2 \dd \gamma(x,y) +\eps\entropy(\gamma\,|\,\mu^- \otimes \mu^+)\Bigg\},\]
and we denote as before by $\gamma_\eps$ the optimal entropic plan. Our goal is to give an estimate on how $\gamma_\eps$ fails to be concentrated on $\Gamma_f$ for small $\eps>0$ in a sense to be made precise. First we observe that by nonnegativity of the entropic term, we have
\[v_\eps-v_0 \geq \frac{1}{2} \int_{\R^d\times \R^d} \abs{x-y}^2 \dd \gamma_\eps(x,y)-\frac{1}{2} W_2^2(\mu^-, \mu^+).\]
Defining the duality gap as in the previous section by:
\[E(x,y)\coloneqq \frac{1}{2} \abs{x-y}^2-\phi(x)-\psi(y)=f(x)+f^*(y)-x \cdot y \qquad (\forall x,y \in \R^d),\]
one has $E \geq 0$ and $E(x,y)=0$ if and only if $(x,y)\in \Gamma_f$, i.e. $y\in \partial f(x)$. Denoting by $\gamma_0$ an optimal transport plan, since $E=0$ on $\spt{\gamma_0}$ and $\gamma_\eps$ and $\gamma$ share the same marginals, we have 
\begin{equation}\label{contvE1}
v_\eps-v_0 \geq \frac{1}{2} \int_{\R^d\times \R^d} \abs{x-y}^2 \dd \gamma_\eps(x,y)-\frac{1}{2} W_2^2(\mu^-, \mu^+)= \int_{\R^d\times \R^d} E(x,y) \dd \gamma_\eps(x,y).
\end{equation}
In the event where $\nabla f$ is $M$-Lipschitz, which, by Caffarelli's regularity theory \cite{caffarelliBoundaryRegularityMaps1992,caffarelliBoundaryRegularityMaps1996} holds true if $\mu^\pm$ have H\"older densities bounded away from zero on their supports and the latter are smooth and uniformly convex, arguing as Berman in \cite{bermanConvergenceRatesDiscretized2021} and as Li and Nochetto in \cite{liQuantitativeStabilityError2021}, who build upon an earlier argument of Gigli \cite{gigliHolderContinuityintimeOptimal2011}, one can use the inequality
\begin{equation}\label{linoclikegap}
E(x,y)=f^*(y)-f^*(\nabla f(x))-x \cdot (y-\nabla f(x)) \geq \frac{1}{2M} \abs{y-\nabla f(x)}^2
\end{equation}
(using the fact that $x\in \partial f^*(\nabla f(x))$ and $f^{*}-\frac{1}{2M} \abs{\cdot}^2$ is convex as soon as $\nabla f$ is $L$-Lipschitz) to arrive at:
\begin{proposition}
If Brenier's optimal transport map $T=\nabla f$ between the compactly supported probability measures $\mu^-$ and $\mu^+$ is $M$-Lipschitz, denoting by $\gamma_\eps$  the optimal entropic plan between $\mu^-$ and $\mu^+$, one has 
\begin{equation}\label{entropiclnerror}
\int_{\R^d\times \R^d} \abs{y-T(x)}^2 \dd \gamma_\eps(x,y) \leq M(d \eps \log(1/\eps) +O(\eps)).
\end{equation}
In particular, if $T_\eps$ denotes the barycentric projection of $\gamma_\eps$ (i.e. $T_\eps(x)$ is the conditional expectation of $Y$ given $X=x$ with $(X,Y)$ distributed according to $\gamma_\eps$), there holds
\begin{equation}\label{barycerror}
\norm{T_\eps- T}^2_{L^2(\mu^-) } \leq M(d \eps \log(1/\eps) +O(\eps)).
\end{equation}
\end{proposition}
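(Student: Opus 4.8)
The plan is to read off \labelcref{entropiclnerror} and \labelcref{barycerror} by combining the already-recorded lower bound on the duality gap with the fine upper bound of \Cref{value_general_upper_bound2}. First, chaining \labelcref{contvE1} and \labelcref{linoclikegap} and using that $E=c-\phi\oplus\psi$ vanishes on $\spt{\gamma_0}$ for an optimal transport plan $\gamma_0$ while $\gamma_\eps$ and $\gamma_0$ have the same marginals, one obtains
\[
v_\eps - v_0 \;\geq\; \int_{\R^d\times\R^d} E(x,y)\,\dd\gamma_\eps(x,y) \;\geq\; \frac{1}{2M}\int_{\R^d\times\R^d}\abs{y-T(x)}^2\,\dd\gamma_\eps(x,y).
\]

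Next I would invoke \Cref{value_general_upper_bound2}. The quadratic cost $c(x,y)=\tfrac12\abs{x-y}^2$ is smooth, hence $\contspace^{1,1}_\loc$ on any pair of bounded open convex sets $\Omega^\pm$ containing $\spt{\mu^\pm}$, and in the regime considered — where $T=\nabla f$ is $M$-Lipschitz, e.g.\ under the Caffarelli-type hypotheses recalled above — the marginals $\mu^\pm$ have $L^\infty$ densities, so \Cref{value_general_upper_bound2} provides a constant $M'\geq 0$ with $v_\eps\leq v_0 + \tfrac d2\eps\log(1/\eps) + M'\eps$ for every $\eps\in(0,1)$. Substituting into the previous display (and using $v_\eps\geq v_0$ from \Cref{thm:analytic}) yields
\[
\int_{\R^d\times\R^d}\abs{y-T(x)}^2\,\dd\gamma_\eps(x,y)\;\leq\;2M(v_\eps-v_0)\;\leq\;M\big(d\eps\log(1/\eps)+2M'\eps\big),
\]
which is exactly \labelcref{entropiclnerror}.

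For \labelcref{barycerror} I would disintegrate $\gamma_\eps=\mu^-\otimes(\gamma_\eps)_x$ with respect to its first marginal, so that by definition $T_\eps(x)=\int y\,\dd(\gamma_\eps)_x(y)$. Applying Jensen's inequality to the convex function $z\mapsto\abs{z-T(x)}^2$ gives, for $\mu^-$-a.e.\ $x$,
\[
\abs{T_\eps(x)-T(x)}^2\;\leq\;\int_{\R^d}\abs{y-T(x)}^2\,\dd(\gamma_\eps)_x(y),
\]
and integrating against $\mu^-$ and using Fubini together with \labelcref{entropiclnerror} yields $\norm{T_\eps-T}^2_{L^2(\mu^-)}\leq\int_{\R^d\times\R^d}\abs{y-T(x)}^2\,\dd\gamma_\eps\leq M(d\eps\log(1/\eps)+O(\eps))$.

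There is no real obstacle here: the argument is a two-line combination of \labelcref{contvE1}, \labelcref{linoclikegap} and \Cref{value_general_upper_bound2}, plus a Jensen step. The only point requiring a word of care is checking that \Cref{value_general_upper_bound2} is legitimately applicable, i.e.\ that the hypothesis ``$\nabla f$ is $M$-Lipschitz'' is placed in (or supplemented by) a framework in which $\mu^\pm\in L^\infty$ — automatic in the Caffarelli setting quoted before the statement — and choosing the domains $\Omega^\pm$ to be bounded open convex neighborhoods of the compact supports. Note in particular that using the coarser \Cref{value_general_upper_bound} instead would only give the leading constant $2Md$, so the $\contspace^{1,1}$ upper bound with its sharp $d/2$ is what produces the exact constant $Md$ in \labelcref{entropiclnerror} and \labelcref{barycerror}.
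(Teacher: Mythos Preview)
Your proof is correct and follows essentially the same route as the paper: chain \labelcref{contvE1} and \labelcref{linoclikegap} with an upper bound on $v_\eps-v_0$, then apply Jensen's inequality to the disintegration for \labelcref{barycerror}. The only difference is that the paper invokes the coarser \Cref{value_general_upper_bound} (which requires no density assumption on $\mu^\pm$) as its primary upper bound and mentions \Cref{value_general_upper_bound2} only parenthetically as an improvement when $\mu^\pm\in L^\infty$, whereas you go straight for \Cref{value_general_upper_bound2}; your caveat about needing the $L^\infty$ hypothesis is therefore exactly to the point.
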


\begin{proof}
Inequality \labelcref{entropiclnerror} follows directly from \labelcref{contvE1}, \labelcref{linoclikegap} and the upper bound $v_\eps-v_0 \leq d \eps \log(1/\eps) +O(\eps)$ from  \Cref{value_general_upper_bound} (note also that if $\mu^\pm$ have bounded densities, one can improve the factor $d$ by $\frac{d}{2}$ thanks to   \Cref{value_general_upper_bound2}). Finally \labelcref{barycerror} directly follows from \labelcref{entropiclnerror} and Jensen's inequality:
\begin{align*}
\int_{\R^d\times \R^d} \abs{y-T(x)}^2 \dd \gamma_\eps(x,y) &= \int_{\R^d} \int_{\R^d} \abs{y-T(x)}^2 \dd \gamma_\eps^x(y) \dd \mu^-(x)\\
&\geq  \int_{\R^d}  \abs*{ \int_{\R^d} (y-T(x)) \dd \gamma_\eps^x(y)}^2  \dd \mu^-(x)\\
&= \int_{\R^d} \abs{T_\eps(x)-T(x)}^2\dd\mu^-(x),
\end{align*}
where $\gamma_\eps^x$ is the disintegration of $\gamma_\eps$ with respect to its first marginal.
\end{proof}

Of course the requirement that the convex potential $f$ is smooth is quite demanding and cannot be taken for granted in general.  Yet, if $f$ is an arbitrary lsc convex function, one can take advantage of Minty's trick to have a quadratic detachment lower bound on $E$ as we did in \Cref{gap_lemma}.

\begin{proposition}
If $\mu^{\pm} \in \probspace(\R^d)$ are compactly supported probability measures, and if we denote by $\gamma_\eps$ the optimal entropic plan from $\mu^-$ to $\mu^+$, the following stability bound holds:
\[ \int_{\R^d\times \R^d} \abs{x-(\id + \partial f)^{-1}(x+y)}^2 \dd \gamma_\eps(x,y) \leq (d \eps \log(1/\eps) +O(\eps)),\]
where $(\id +\partial f)^{-1}$ is the (single-valued) resolvent associated with $f$.
\end{proposition}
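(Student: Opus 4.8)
The plan is to run the same \emph{Minty's trick} as in \Cref{gap_lemma}, specialized to the quadratic cost $c(x,y)=\tfrac12\abs{x-y}^2$, for which $\nabla^2_{xy}c\equiv-\id$ and hence the quantity $\kappa(r)$ of \Cref{gap_lemma} vanishes identically. I would first set up the resolvent: since $f$ is proper, lsc and convex, $\partial f$ is maximal monotone, so by Minty's theorem $\id+\partial f$ is a bijection of $\R^d$ and its inverse $J\coloneqq(\id+\partial f)^{-1}$ is single-valued and $1$-Lipschitz on all of $\R^d$. The property to keep in mind is that, for $w\in\R^d$ and $p\coloneqq J(w)$, one has $w-p\in\partial f(p)$, equivalently $(p,w-p)\in\Gamma_f$, i.e.\ $E(p,w-p)=0$. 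Throughout I would use the globally defined $c$-conjugate potentials $\phi(x)=\tfrac12\abs{x}^2-f(x)$ and $\psi(y)=\tfrac12\abs{y}^2-f^*(y)$, so that $E=c-\phi\oplus\psi\geq 0$ is defined everywhere on $\R^d\times\R^d$.

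The substantive step is a quadratic detachment bound valid \emph{pointwise everywhere}:
\[
E(x,y)\ \geq\ \abs{x-J(x+y)}^2\qquad\text{for every }(x,y)\in\R^d\times\R^d.
\]
To prove it I would fix $(x,y)$, put $w=x+y$ and $p=J(w)$, and combine three relations: the two Fenchel--Young inequalities $f(x)+f^*(w-p)\geq x\cdot(w-p)$ and $f(p)+f^*(y)\geq p\cdot y$, which translate into $\phi(x)+\psi(w-p)\leq c(x,w-p)$ and $\phi(p)+\psi(y)\leq c(p,y)$, together with the equality $\phi(p)+\psi(w-p)=c(p,w-p)$ coming from $E(p,w-p)=0$. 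Summing the two inequalities and subtracting the equality gives $\phi(x)+\psi(y)\leq c(x,w-p)+c(p,y)-c(p,w-p)$, hence $E(x,y)\geq c(x,y)-c(x,w-p)-c(p,y)+c(p,w-p)$; inserting $c(a,b)=\tfrac12\abs{a-b}^2$ and writing $a\coloneqq x-p$, $b\coloneqq y-p$, the right-hand side collapses to $\tfrac12\abs{a-b}^2-\abs{b}^2+\tfrac12\abs{a+b}^2=\abs{a}^2=\abs{x-p}^2$. Equivalently, this is \labelcref{gap_inequality} of \Cref{gap_lemma} evaluated at $\mbf{\bar x}=\mbf{x'}=(p,w-p)$ and $\mbf x=(x,y)$: the two points share the $v$-coordinate $\tfrac12(x+y)$ so that $\Delta v=0$, their $u$-coordinates differ by $x-p$, and $\kappa\equiv 0$.

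It then remains to integrate against $\gamma_\eps$ and use \labelcref{contvE1}, which yields $\int E\,\dd\gamma_\eps\leq v_\eps-v_0$, to get $\int\abs{x-J(x+y)}^2\,\dd\gamma_\eps\leq v_\eps-v_0$; the upper bound $v_\eps-v_0\leq d\eps\log(1/\eps)+O(\eps)$ of \Cref{value_general_upper_bound} — applicable since $\mu^\pm$ are compactly supported, so $H_\eps(\mu^\pm)\leq d\log(1/\eps)+O(1)$ by \labelcref{upper_bound_box_counting_number} — then finishes the proof. I expect the only slightly delicate points to be the well-posedness of the resolvent $J$ on the whole of $\R^d$, namely Minty's surjectivity theorem for the maximal monotone operator $\partial f$, and the verification that the quadratic detachment holds at every point and not merely $\gamma_\eps$-almost everywhere; both are dealt with by working with the globally defined conjugate potentials, so no serious obstacle remains once the Minty computation of \Cref{gap_lemma} is available.
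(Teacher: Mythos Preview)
Your proposal is correct and follows essentially the same route as the paper: choose $x'=(\id+\partial f)^{-1}(x+y)$ and $y'=x+y-x'$ so that $E(x',y')=0$, then the quadratic-cost specialization of the inequality in \Cref{gap_lemma} (namely $E(x,y)+E(x',y')\ge -(x'-x)\cdot(y'-y)$) yields $E(x,y)\ge\abs{x-J(x+y)}^2$, and one concludes via \labelcref{contvE1} and \Cref{value_general_upper_bound}. Your Fenchel--Young derivation is just an unpacked version of that same inequality, and your explicit appeal to Minty's surjectivity theorem for the well-definedness of $J$ is a welcome precision.
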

\begin{proof}
Let us first observe that  when $c$ is the quadratic cost \labelcref{gap_inequality} takes the form
\[E(x,y) \geq -E(x',y') - (x'-x) \cdot (y'-y) \qquad (\forall x,y,x', y' \in \R^d).\]
Now we observe that the resolvent $(\id +\partial f)^{-1}$ is a single-valued $1$-Lipschitz map and that $E(x',y')$ vanishes if and only if $ y'+x' \in x' + \partial f(x')$, hence if and only if $x'= (\id +\partial f)^{-1}(x'+y')$. As a consequence if we choose in the inequality above $x'= (\id +\partial f)^{-1}(x+y)$ and $y'=x+y-x'$, we have $E(x',y') = 0$ and we get the quadratic detachment bound:
\begin{equation}\label{quaddetach}
E(x,y) \geq \abs{x-(\id+\partial f)^{-1}(x+y)}^2, \qquad (\forall x,y \in \R^d).
\end{equation}
Hence, from \labelcref{quaddetach}, \labelcref{contvE1} and  \Cref{value_general_upper_bound}, we obtain the desired inequality (and again one can improve the factor $d$ by $\frac{d}{2}$ thanks to  \Cref{value_general_upper_bound2} if $\mu^\pm$ have bounded densities). 
\end{proof}

\smallskip

\noindent{\textbf{Acknowledgments:}} G.C.  acknowledges the support of the Lagrange Mathematics and Computing Research Center.

{\sloppy
\printbibliography
}

\end{document}